\theoremstyle{definition}
\theoremstyle{remark}
\numberwithin{equation}{section}
\swapnumbers \theoremstyle{plain}
\newtheorem{thm}{Theorem}[section]
\newtheorem{lem}[thm]{Lemma}
\newtheorem{cor}[thm]{Corollary}
\newtheorem{prop}[thm]{Proposition}
\theoremstyle{remark}
\theoremstyle{definition}
\newtheorem{defn}[thm]{Definition}
\newcommand{\T}{\mathcal T}
\newcommand{\C}{\mathcal C}
\renewcommand{\S}{{\mathcal S}}
\newcommand{\bdy}{\partial}
\newcommand{\bbb}{\mathbb}
\newcommand{\rppp}{\mathbb{R}P^3}
\newcommand{\td}{\tilde}
\newcommand{\open}[1]{\stackrel{\circ}{#1}}
\newcommand{\ve}{\varepsilon}
\newcommand{\abs}[1]{\lvert#1\rvert}
\begin{document}

\title{Annular-efficient triangulations of $3$--manifolds}
\author{William Jaco}
\address{Department of Mathematics, Oklahoma State University,
Stillwater, OK 74078}

\email{jaco@math.okstate.edu}
\thanks{The first author was partially supported by NSF/DMS Grants,
The Grayce B. Kerr Foundation, The American Institute of Mathematics
(AIM), and and The Visiting Research Scholar Program at University
of Melbourne (Australia)}

\author{J.~Hyam Rubinstein}
\address{Department of Mathematicss and Statistics,
University of Melbourne, Parkville, 
VIC 3010, Australia}
\email{rubin@ms.unimelb.edu.au}
\thanks{The second author was partially supported by The Australian Research
Council and The Grayce B. Kerr Foundation.}

\subjclass{Primary 57N10, 57M99; Secondary 57M50}
 \date{\today}

\keywords{ideal triangulation, one-vertex triangulation, layered
triangulation, inflation, normal surface, vertex-linking, crushing,
frame, slope, Dehn-filling, exceptional surgery}

\begin{abstract} A triangulation of a compact 3-manifold is annular-efficient if it is 0--efficient and the only normal, incompressible annuli are thin edge-linking.  If a compact 3--manifold has an annular--efficient triangulation, then it is irreducible, $\bdy$--irreducible, and an-annular. Conversely, it is shown that for a compact, irreducible, $\bdy$--irreducible, and an-annular 3--manifold, any triangulation can be modified to an annular-efficient triangulation.  It follows that for a manifold satisfying this hypothesis, there are only a finite number of boundary slopes for incompressible and $\bdy$--incompressible surfaces of a bounded Euler characteristic.

\end{abstract}

\maketitle
\section{Introduction}  In this paper we connect interesting properties of ideal triangulations of the interiors of compact 3--manifolds with interesting properties of triangulations of the compact 3--manifold by exploiting the inverse relationship between crushing a triangulation along a normal surface \cite{jac-rub-0eff}  and that of inflating an ideal triangulation \cite {jac-rub-inflate}.  In \cite{jac-rub-0eff} it is shown that a compact, irreducible, $\bdy$--irreducible, and an-annular 3-manifold admits an ideal triangulation of its interior. Here we show that any triangulation of such a $3$--manifold can be modified to an ideal triangulation of the  interior of the manifold; hence, providing a construction for such ideal triangulations.  In \cite{jac-rub-0eff} it also is shown that one can get 0--efficient ideal triangulations of  the interiors of these manifolds.  Here we show that we actually can construct ideal triangulations that satisfy a stronger condition that implies 0-efficient.

Marc Lackenby proves in \cite{lack-taut} that under our hypothesis and with the additional condition that each boundary component of the manifold is a torus, then these manifolds admit  a taut ideal triangulation of their interiors.  We were not able to show our construction will give taut ideal triangulations in the case of tori boundaries; indeed, at this point and in the case of tori boundaries, tautness  appears to be a stronger condition on an ideal triangulation than those we have.  It is shown in  \cite {kang-rub-ideal-II} that if in addition the manifold is atoridal, then a taut ideal triangulation is not only 0--efficient but is also 1--efficient in the sense that the only normal tori are vertex-linking.

In Section 2, we define what we mean for a triangulation of a compact 3--manifold with boundary to have a normal boundary and the notion of a normal surface being normally isotopic into the boundary.  Theorem \ref{determine-bdry-linking} establishes  for any triangulation of the manifolds we are interested in, there is an algorithm to decide if there is a closed normal surface that is isotopic into the boundary  but is not normally isotopic into the boundary. Furthermore, if there is one the algorithm will construct one.  In \cite{jac-rub-0eff} it is shown that given any triangulation, it can be decided if the triangulation is 0--efficient.  In Proposition \ref{decide-annular-eff}  we show the analogous result that it can be decided if a triangulation is annular-efficient. Furthermore, if it is not annular-efficient, the algorithm constructs a non vertex-linking normal 2--sphere or disk, or if there are none of these, it then constructs a non thin edge-linking normal annulus.  A triangulation having the property that any normal surface isotopic into the boundary must be normally isotopic into the boundary can be regarded as a $\bdy$--efficient condition in the sense we use 0-efficient, annular-efficient, and 1--efficient.

In Section 3 we review crushing triangulations along normal surfaces and inflating ideal triangulations.   An inflation is defined in terms of a combinatorial crushing; this enables us to establish very strong relationships between ideal triangulations and their inflations.  In particular, in Theorem \ref{bijection-ideal-inflate}  we establish a one-one correspondence between the closed normal
surfaces  in an ideal triangulation  and the closed normal surface in
any of its inflations; furthermore, we show corresponding surfaces under this correspondence are homeomorphic.

In Section 4 we have our main result: 
\vspace{.125 in}

\noindent{\bf Theorem 4.5.} \emph{ Suppose $M\ne \mathbb{B}^3$ is a compact,
irreducible, $\bdy$--irreducible,  an-annular $3$--manifold with
nonempty boundary. Then there is an algorithm that will modify any triangulation of $M$  to
an annular-efficient triangulation of $M$.}

\vspace{,125 in}We use annular-efficient triangulations to show that for a compact, irreducible, $\bdy$--irreducible, and an-annular 3-manifold there are only a finite number of boundary slopes possible for incompressible and $\bdy$--incompressible surfaces having a bounded Euler characteristic.  It has been communicated to us by David Bachman and Saul Schleimer that they have independently obtained a similar result.

\section{Triangulations and Normal Surfaces} We continue with the
use of (pseudo) triangulations and ideal triangulations as in  \cite{jac-rub-0eff}.

 If
$\mathbf{\td{\Delta}}$ is a pairwise disjoint collection of oriented
tetrahedra and $\mathbf{\Phi}$ is a family of orientation-reversing
face identifications of the tetrahedra in $\mathbf{\td{\Delta}}$,
then the identification space $X
=\mathbf{\td{\Delta}}/\mathbf{\Phi}$ is a 3-complex and is a
3--manifold at each point except possibly at the vertices. If $X$ is
a manifold, we denote the collection of tetrahedra and the face
identifications by a single symbol $\T$ and say $\T$ is a
\emph{triangulation} of the manifold $X$. If $X$ is not a manifold,
then $X\setminus\{vertices\}$ is the interior of a compact
$3$--manifold  with boundary, $M$, and we say $\T$ is an \emph{ideal
triangulation} of $\open{M}$, the interior of $M$; in this case we also say that $X$ is a \emph{pseudo-manifold} and $\T$ is an ideal triangulation of $X$. For an ideal triangulation, the
image of a vertex of a tetrahedron in $\mathbf{\td{\Delta}}$ is
called an {\it ideal vertex} and its {\it index} is the genus of its
vertex-linking surface. For an ideal triangulation we will always
assume the index of each vertex is $\ge 1$.

For our triangulations, the simplices of $\mathbf{\td{\Delta}}$ are
not necessarily embedded in $X$; however, the interior of each
simplex is embedded. We call the image in $X$ of a tetrahedron,
face, or edge in $\mathbf{\td{\Delta}}$, a tetrahedron, face, or
edge. For a tetrahedron $\Delta$ in $X$, there is precisely one
tetrahedron $\td{\Delta}$ in $\mathbf{\td{\Delta}}$ that projects to
$\Delta$, called the {\it lift of $\Delta$}. For a face $\sigma$ in
$X$, there are either one or two faces in $\mathbf{\td{\Delta}}$
that project to $\sigma$; if only one face projects to $\sigma$,
then $\sigma$ is in the boundary of $M$. If $e$ is an edge in $X$,
the number of edges in $\mathbf{\td{\Delta}}$ that project to $e$ is
the {\it index of $e$}.

See \cite{jac-rub-0eff} for more details regarding triangulations from
our point of view.

\subsection{Normal surfaces} If $M$ is a $3$--manifold and $\T$ is a
triangulation of $M$, we say the properly embedded surface $S$ in
$M$ is {\it normal} (with respect to $\T$) if for every tetrahedron
$\Delta$ in $\mathbf{\td{\Delta}}/\mathbf{\Phi}$,  the intersection
of $S$ with $\Delta$ lifts to a collection of normal triangles and
normal quadrilaterals in $\td{\Delta}$, the lift of $\Delta$.
Note
that since our tetrahedra have possible face identifications, the
intersection of a normal surface 
with a tetrahedron need not be a
normal triangle or a normal quadrilateral 
but might be one of these
with edge identifications.

We shall assume the reader is familiar with classical normal 
surface
theory, which carries over in all of our situations.

A triangulation of a compact $3$--manifold
with boundary is said to be a {\it normal boundary triangulation} or
to have a {\it normal boundary} if the frontier of a small regular
neighborhood of the boundary is normally isotopic to a normal
surface. In this case, we call the normal surface consisting of the
frontier of a small regular neighborhood of the boundary the {\it
normal boundary}. Not all triangulations have a normal boundary; for
example, layered triangulations of handlebodies \cite{jac-rub-layered} contain no closed normal surfaces
and, hence, can not have a normal boundary.  

A properly embedded surface in a compact 3--manifold with boundary is said to be \emph{isotopic into $\bdy M$} if there is an isotopy of the surface through $M$ into $\bdy M$ keeping the boundary of the surface fixed.  If the manifold is triangulated and the surface is  closed and normal, it is said to be \emph{normally isotopic into $\bdy M$}, if  the triangulation has normal boundary and the surface is normally isotopic to the normal boundary. We are interested in triangulations in which the only closed, normal surface isotopic into the boundary is the normal boundary. 

A properly embedded annulus in a $3$--manifold is {\it essential} if
it is incompressible and not isotopic into the boundary.  A compact
$3$--manifold is said to be {\it an-annular} if it has no properly
embedded, essential annuli.  

The following theorem gives conditions under which we can decide if a triangulation of a compact 3-manifold with boundary has a closed, normal surface that is isotopic into the boundary but is not normally isotopic into the boundary.  
 
\begin{thm} \label{determine-bdry-linking} Suppose $M$ is a compact, orientable 3--manifold with boundary that is irreducible,  $\bdy$--irreducible, and an-annular.  Then for any triangulation $\T$ of $M$ there is an algorithm to decide if there is a closed normal surface that is isotopic into $\bdy M$ but is not normally isotopic into $\bdy M$. Furthermore, if there is one the algorithm will construct one. \end{thm}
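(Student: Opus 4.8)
The plan is to reduce the question to a finite search among normal surfaces, in the spirit of the $0$--efficiency algorithm of \cite{jac-rub-0eff}. First I would normalize the topology. A closed surface that has been isotoped into $\bdy M$ lies on a union of components of $\bdy M$, so a connected such surface is isotopic to a single boundary component $F$; since $M$ is $\bdy$--irreducible, $\bdy M$, hence $F$, hence any surface isotopic to $F$, is incompressible, and $F$ is not a $2$--sphere unless $M=\mathbb{B}^3$ (a case disposed of separately). Thus the surface we are hunting is a closed, incompressible, normal surface $S$ that cuts $M$ into a copy of $M$ together with a product region $F\times I$ ($F$ a component of $\bdy M$), and whose normal coordinate vector equals that of no component of the normal boundary of $\T$ --- or, if $\T$ has no normal boundary at all, any such $S$ whatsoever, since then ``normally isotopic into $\bdy M$'' fails by definition.

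The decision procedure would run as follows. (i) Decide whether $\T$ has a normal boundary by applying the normalization moves to the frontier of a small regular neighbourhood of $\bdy M$ and checking that the process terminates with no compression and no change of homeomorphism type; if so, record the normal coordinate vectors $\mathbf b_1,\dots,\mathbf b_k$ of the components of the resulting normal boundary. (ii) Compute the finitely many vertex normal surfaces of $\T$, and from them a computable finite family $\C$ of candidates --- the connected closed vertex surfaces, together with the bounded family of Haken sums with vertex--linking $2$--spheres of the interior vertices into which a minimal example will be forced by step (iii) below. (iii) For each $S\in\C$, test whether $S$ is incompressible; if so, cut $M$ along $S$, retriangulate the two pieces, and decide whether one of them is homeomorphic to $F\times I$ for some boundary component $F$ --- each of these is algorithmic, by classical normal surface theory and the solvability of the homeomorphism problem for Haken $3$--manifolds. (iv) Output any $S\in\C$ that passes the product test and has coordinate vector distinct from every $\mathbf b_i$; if none does, report that no such surface exists.

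Soundness is immediate from the first paragraph. The real content is completeness: if $\T$ carries any closed normal surface isotopic into $\bdy M$ but not normally isotopic into $\bdy M$, then it carries one lying in $\C$. I would prove this by taking such a surface $S$ of least weight and, supposing it is not already in $\C$, writing $S=X+Y$ as a nontrivial Haken sum and running a regular--exchange analysis on the pieces: $2$--sphere components are discarded using irreducibility, disk components are controlled using $\bdy$--irreducibility, and the an-annular hypothesis is invoked to rule out the essential annuli that would otherwise obstruct isotoping a summand back into $\bdy M$; the outcome is a closed normal surface of strictly smaller weight with the same two properties, contradicting minimality. This weight bound on a minimal ``bad'' surface is the one genuinely hard step, and it is exactly where all three hypotheses on $M$ enter --- it is the analogue, for the condition ``isotopic into $\bdy M$'', of the fact that a non-$0$--efficient triangulation carries a \emph{vertex} non-vertex-linking $2$--sphere. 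The product--recognition step (iii) is a secondary technicality that I would outsource to algorithmic $3$--manifold topology.
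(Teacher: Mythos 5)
Your overall architecture (reduce to a finite, computable family of candidate normal surfaces, then test each candidate for the two properties) matches the paper's, and your testing step is sound: the paper likewise invokes \cite{jac-tol} to decide whether a given normal surface is isotopic into $\bdy M$, and checks normal isotopy into $\bdy M$ directly. But the completeness step --- the claim that a ``bad'' surface, if one exists, can be found in your finite family $\C$ --- is exactly the content of the theorem, and you have only asserted it by analogy with the $0$--efficiency argument. The route you sketch (take a least--weight bad surface, write $S=X+Y$, and produce a strictly lighter surface with the same two properties) does not obviously work: the summands $X$ and $Y$ of a Haken sum of a boundary--parallel surface need not themselves be isotopic into $\bdy M$, and a regular exchange along a single exchange curve returns a surface of the \emph{same} weight, so there is no evident mechanism forcing weight to drop while preserving both ``isotopic into $\bdy M$'' and ``not normally isotopic into $\bdy M$.'' This is precisely where all three hypotheses must do work, and you have not shown how they do it.

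The paper's proof uses a different extremal principle, and the difference is not cosmetic. It partially orders the normal surfaces isotopic into $\bdy M$ by containment in product regions ($S'$ is larger than $S$ if $S$ lies in the product between $S'$ and $\bdy M$), uses Kneser finiteness to extract a \emph{maximal} bad surface $S$, and shows that such an $S$ is \emph{fundamental} (not merely a vertex surface plus trivial pieces). The argument that a non-fundamental maximal $S$ leads to a contradiction goes through exchange annuli: an exchange annulus $A$ for $S=X+Y$ is a $0$--weight annulus which, by an-annularity, must be isotopic into $S$; if $A$ lies on the far side of $S$ from $\bdy M$, normalizing the frontier of $N(S\cup A)$ behind the barrier $S\cup A$ produces a bad surface strictly larger than $S$, contradicting maximality; if $A$ lies in the product region, a count of trace curves in the sub-annulus $A'\subset S$ cobounding a solid torus with $A$ yields a contradiction with a minimal choice of $A$. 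None of this is a weight computation, and the conclusion (fundamental, i.e.\ Hilbert-basis element) differs from your candidate set (vertex surfaces plus vertex-linking spheres). To repair your proposal you would either need to supply a genuine weight-reduction lemma for this property --- which I do not see how to do --- or replace your extremal principle with something like the paper's maximality argument.
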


\begin{proof}  If $S$ and $S'$ are disjoint normal surfaces embedded in $M$ and both are isotopic into $\bdy M$, we say $S'$ is \emph{larger than} $S$ if $S$ is contained in the product region between $S'$ and $\bdy M$. Being larger than is a partial order on closed normal surfaces embedded in $M$.  

Suppose there is a normal surface in $M$ that is isotopic into $\bdy M$ but is \underline{not} normally isotopic   into $\bdy M$.  By Kneser's Finiteness Theorem \cite{kne} there are maximal (relative to the preceding partial order) such surfaces.  Suppose $S$ is a maximal normal surface that is isotopic into $\bdy M$ but not normally isotopic into 
$\bdy M$. We claim $S$ is a fundamental surface.

Suppose $S$ is not fundamental. Then $S = X + Y$ is a nontrivial Haken sum.  Hence, there are exchange annuli between $X$ and $Y$.   Suppose $A$ is an exchange annulus. Then $A$ is a 0-weight annulus meeting $S$ only in its boundary.  There are two possibilities: either $A$ is in not in the product region between $S$ and $\bdy M$ or $A$ is in the product region between $S$ and $\bdy M$.  Since $S$ is isotopic into $\bdy M$ and $M$ is an-annular, then for either possibility, $A$ is isotopic into $S$.   

Let $N=N(S\cup A)$ be a small regular neighborhood of $S\cup A$, then  $N$ has three boundary components; one is a torus bounding a solid torus, which is a product between $A$ and an annulus $A'$ in $S$, another is surface normally isotopic to $S$, and the third is a surface isotopic to $S$ but possibly not normal and even if normal is not  normally isotopic to $S$.  The complex $S\cup A$ is a barrier (see \cite{jac-rub-0eff}) and thus each boundary component of $N$ can be normalized in the closure of the component of its complement not meeting $S\cup A$. 

Suppose $A$ is not in the closure of the product region between $S$ and $\bdy M$.  Then
the component of $\partial N$ isotopic to $S$ can be normalized missing $S ? A$ to a normal surface $S?$.  Since $M$ is irreducible and $\bdy$--irreducible, $S'$ is isotopic to $S$ and therefore, isotopic into $\bdy M$.  Moreover $SÕ$ is not normally isotopic to $S$ or into $\bdy M$ due to the annulus $A$.  But $S'$ is larger than $S$, which contradicts $S$ being maximal.

Suppose $A$ is in the closure of the product region between $S$ and  $\bdy M$.  Then $A$ co-bounds a solid torus which is a product between $A$ and an annulus $A'$ in $S$.  We observe that $X \neq Y$, for if this were not the case, then $X$ (and $Y$) would be one-sided and $M$ would be a twisted I-bundle, contradicting $M$ being an-annular.  Hence, there must be a trace curve in $A'$. Suppose we have selected $A'$ in this situation so that it has a minimal number of trace curves.  Since there is a trace curve in $A'$, there is another exchange annulus $A_1$ for $S$ meeting $A'$ in at least one of its boundary components.  If $A_1$ is 
not in the closure of the product region between $S$ and $\bdy M$, then the preceding argument gives a contradiction to our selection of $S$.  So we may assume $A_1$  is, like $A$,  in the closure of the product region between $S$ and  $\bdy M$ and therefore in the solid torus co-bounded by  $A$ and $A'$.  It follows that $A_1$ 
co-bounds a solid torus which is a product between $A_1$ and an annulus $A_1'$ in $A'\subset S$.  However, then $A_1'$  has fewer trace curves than $A'$ contradicting our choice of the exchange annulus $A$. 

So, there is a closed normal surface that is isotopic into $\bdy M$ and not normally isotopic into $\bdy M$ if and only if there is such a surface among the fundamental surfaces for the triangulation $\T$ of $M$.  By \cite{jac-tol} given any normal surface we can determine if it is isotopic into $\bdy M$ and it is straight forward to recognize if it is normally isotopic into $\bdy M$. It follows if such a surface exists, we can construct one.  \end{proof}

The argument carries over to an analogous result in the case of an ideal triangulation.

\begin{cor}\label{determine-vertex-linking} Suppose $M$ is a compact, orientable 3--manifold with boundary that is irreducible,  $\bdy$--irreducible, and an-annular.  Then for any ideal triangulation $\T^*$ of $\open{M}$ and any ideal vertex $v^*$ of $\T^*$, there is an algorithm to decide if there is a closed normal surface that is isotopic into the vertex-linking surface of $v^*$ but is not normally isotopic into the vertex-linking surface of $v^*$. Furthermore, if there is one, the algorithm will construct one. \end{cor}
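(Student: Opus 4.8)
The plan is to carry over the proof of Theorem~\ref{determine-bdry-linking} essentially word for word, with the vertex-linking surface $L$ of the ideal vertex $v^*$ playing the role that $\bdy M$ plays there. First I would record the setup. Since the index of $v^*$ is at least $1$, its link $L$ is a closed orientable surface of genus $\ge 1$ that is carried by the normal triangles in the corners of the tetrahedra meeting $v^*$; in particular $L$ is a normal surface, and it cuts off in the pseudo-manifold a product neighborhood of $v^*$. A closed normal surface in $\open{M}$ that is isotopic into $L$ is isotopic to $L$, so it bounds a compact product region with a parallel copy of $L$ and has a well-defined $v^*$--side. For disjoint closed normal surfaces $S, S'$ in $\open{M}$ both isotopic into $L$, I would declare $S'$ \emph{larger than} $S$ if $S$ lies in the product region between $S'$ and $L$; as in the theorem this is a partial order. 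Because $\open{M}$ fails to be compact only in its cusps while all the surfaces under consideration are closed, Kneser's Finiteness Theorem~\cite{kne} applies to the closed normal surfaces of the ideal triangulation $\T^*$ exactly as for a triangulation of a compact manifold, so if there is a closed normal surface isotopic into $L$ but not normally isotopic into $L$, there is a maximal such surface; call it $S$.

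Next I would show that a maximal such $S$ is fundamental, which is where essentially all of the work lies. Suppose not, so $S = X + Y$ is a nontrivial Haken sum, producing exchange annuli between $X$ and $Y$. An exchange annulus $A$ is a $0$--weight annulus meeting $S$ only in $\bdy A \subset S$; since $M$ is irreducible, $\bdy$--irreducible, and an-annular, $A$ is isotopic into $S$, regardless of which side of $S$ it lies on. This is precisely the step in Theorem~\ref{determine-bdry-linking} where an-annularity is used, and it is insensitive to whether the nearby boundary-like surface is a component of $\bdy M$ or the link $L$ of an ideal vertex. I would then reproduce the barrier argument verbatim: setting $N = N(S \cup A)$, the complex $S \cup A$ is a barrier, $N$ has three boundary components, and the one that is isotopic to $S$ (but not normally isotopic to $S$) normalizes, in the complementary region missing $S \cup A$, to a normal surface $S'$. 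If $A$ is not on the $v^*$--side of $S$, then $S'$ is isotopic to $S$, hence isotopic into $L$; it is not normally isotopic into $L$ because of $A$; and it is larger than $S$ --- contradicting maximality. If $A$ is on the $v^*$--side of $S$, then it co-bounds, with an annulus $A' \subset S$, a solid torus that is a product between $A$ and $A'$; as in the theorem, one checks $X \ne Y$ (otherwise $S$ bounds a twisted $I$--bundle and $\open{M}$ is a twisted $I$--bundle, which contains essential annuli and so contradicts an-annularity), so $A'$ carries a trace curve, and choosing $A'$ with the fewest trace curves and passing to a second exchange annulus inside it produces an annulus with still fewer trace curves --- the same contradiction as in the theorem. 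Hence $S$ is fundamental.

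Finally I would assemble the algorithm. A closed normal surface isotopic into $L$ but not normally isotopic into $L$ exists if and only if one exists among the finitely many, effectively enumerable fundamental surfaces of $\T^*$. For a given closed normal surface, one can decide whether it is isotopic into $L$ by~\cite{jac-tol} --- this is an isotopy test against the fixed normal surface $L$ --- and recognizing whether it is normally isotopic into $L$ is routine, for instance by comparing normal coordinates with those of $L$. So the algorithm lists the fundamental surfaces of $\T^*$ and tests each; if any passes the first test but fails the second, it is the desired surface.

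The one point requiring genuine care --- the main obstacle --- is checking that the geometric inputs of Theorem~\ref{determine-bdry-linking} (an-annularity ruling out inessential exchange annuli, the normalization of barrier-adjacent surfaces, and the twisted $I$--bundle dichotomy) are unaffected by the replacement of a material boundary component with a cusp; once this is confirmed the rest is bookkeeping. One could instead truncate the cusp of $v^*$ to obtain a compact manifold homeomorphic to $M$ with a distinguished boundary component $L$ and try to quote Theorem~\ref{determine-bdry-linking} directly, but since the truncated complex is not a triangulation in the sense of this paper, running the argument over afresh --- as above --- is cleaner.
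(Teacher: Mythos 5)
Your proposal is correct and takes exactly the approach the paper intends: the paper gives no separate proof of this corollary, stating only that the argument of Theorem \ref{determine-bdry-linking} carries over to the ideal setting, and your write-up is a faithful (indeed more detailed) execution of that carry-over, with the vertex-linking surface playing the role of $\bdy M$.
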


\section{Basics of crushing and inflating triangulations}
\subsection{Crushing triangulations along normal surfaces}  In \cite{jac-rub-0eff} we introduced the procedure of ``crushing a
triangulation along a normal surface."  Details may be reviewed there, as well as in  \cite{jac-rub-inflate}, where the details apply more directly to our situation in this work.

Suppose
$\T$ is a triangulation of the compact $3$--manifold $M$ or an ideal
triangulation of the interior of $M$. Suppose $S$ is a closed normal
surface in $M$, $X$ is the closure of a component of the complement
of $S$, and $X$ does not contain any of the vertices of $\T$. Since
$X$ does not contain any of the vertices of $\T$, the triangulation
$\T$ induces a particularly nice cell-decomposition on $X$, say $\mathcal{C}_X$, 
consisting of {\it truncated-tetrahedra, truncated-prisms, triangular
product blocks}, and {\it quadrilateral product blocks}.  See Figure
\ref{f-cell-decomp}. 

\begin{figure}[htbp]
            \psfrag{X}{$X$}
            \psfrag{s}{\small tetrahedron}
            \psfrag{f}{\small face}

             \psfrag{c}{{\tiny crush}}
            \psfrag{e}{\small edge}
            \psfrag{t}{\begin{tabular}{c}
          {\small truncated-tetrahedron}\\
            \end{tabular}}
            \psfrag{p}{\begin{tabular}{c}
            {\small truncated-prism}\\
            \end{tabular}}
            \psfrag{q}{\begin{tabular}{c}
          {\small triangular}\\
          {\small product block}\\
            \end{tabular}}
            \psfrag{r}{\begin{tabular}{c}
          {\small quadrilateral}\\
          {\small product block}\\
            \end{tabular}}

        \vspace{0 in}
        \begin{center}
        \epsfxsize=3.5 in
        \epsfbox{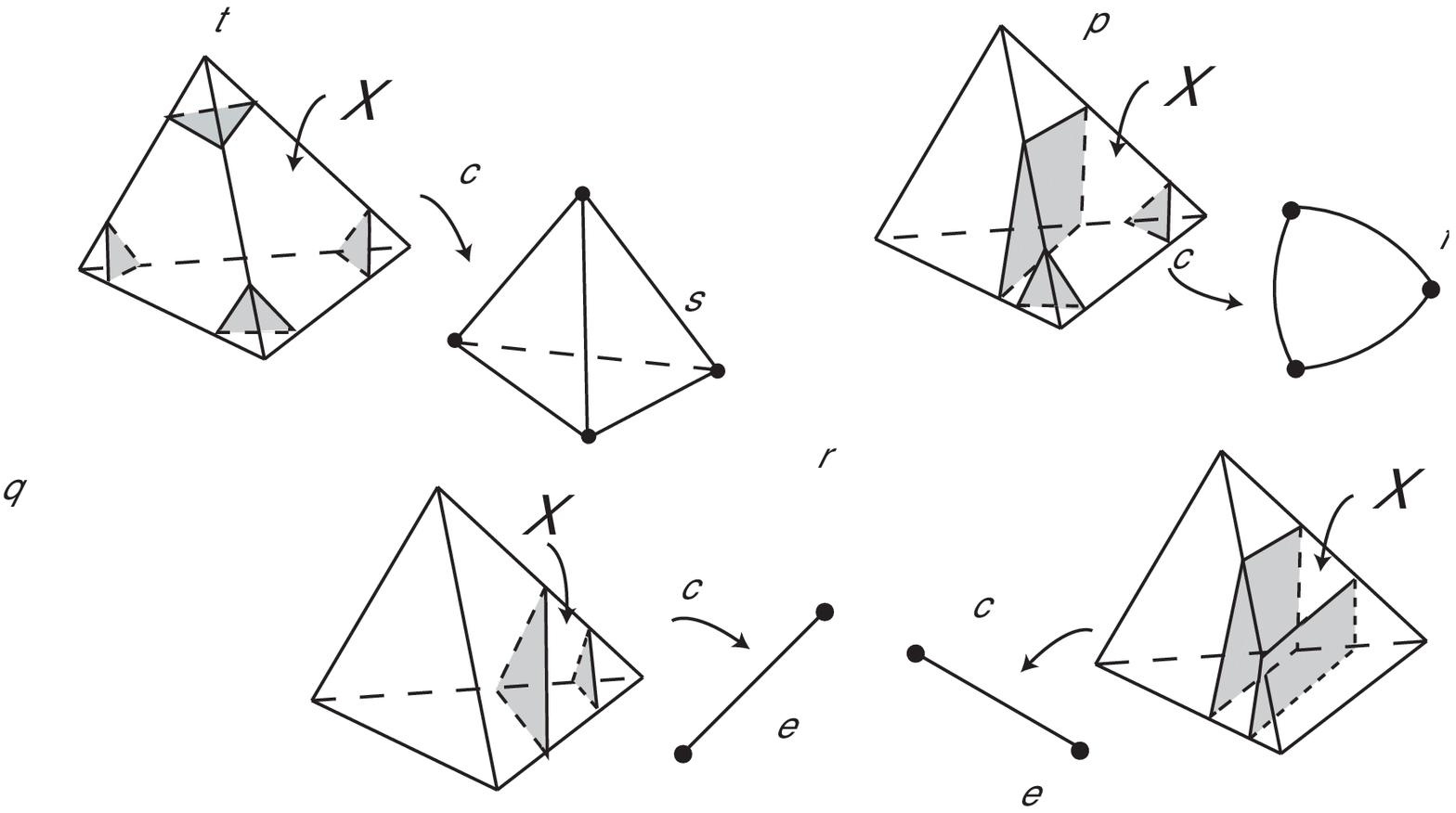}
        \caption{Cells in induced cell-decomposition $\mathcal{C}_X$ of $X$ and their crushing
         to tetrahedra, faces, and edges in an ideal triangulation of $\open{X}$.}
        \label{f-cell-decomp}
        \end{center}

\end{figure}

The boundary of each $3$--cell in $\C_X$ has an induced cell
decomposition in which some of the cells are in $S$ and some are
not. The edges and faces in the decomposition $\C_X$ are called {\it
horizontal} if their interiors are in $S$ and {\it vertical} if
their interiors are not in $S$. The quadrilateral vertical
$2$--cells are called {\it trapezoids}; there are two in a
truncated-prism, three in a triangular block, and four in a
quadrilateral block. The non-trapezoidal vertical $2$--cells are in
truncated-prisms and truncated-tetrahedra and are hexagons.

We define $\mathbb{P}(\C_X)$ as the union, $\mathbb{P}(\C_X)=$ $ \{$vertical edges of $\C_X  \}$ $ \cup \{$trapezoids$\}\cup\{$triangular
blocks$\}\cup\{$quadrilateral blocks$\}$.  Each component of $\mathbb{P}(\C_X)$ is an $I$--bundle.  Suppose each component of $\mathbb{P}(\C_X)$ is a product $I$ bundle. Then a component of $\mathbb{P}(\C_X)$ is a product $\mathbb{P}_i = K_i\times I$,  where $K_i^\ve =K_i\times \ve, \ve =0, 1$ and $K_i\times 0$ and $K_i\times 1$ are isomorphic  subcomplexes in the induced normal cell decomposition on $S$, $i=1,2,\ldots,k$, $k$ being the number of components of $\mathbb{P}(\C_X)$.  In this situation, we call $\mathbb{P}(\C_X)$ the {\it combinatorial product for $\C_X$}.  If $\mathbb{P}(\C_X) \not= X$ and each $K_i$ is a simply
connected planar complex (hence, it is cell-like), we say $\bbb{P}(\C_X)$ is a {\it trivial combinatorial product}.  In applications, we do not always have things so nice and we need to modify $\mathbb{P}(\C_X)$ to an {\it induced product region for $X$}, denoted $\mathbb{P}(X)$.

Now, consider the truncated-prisms in $\C_X$. Each truncated-prism has
two hexagonal faces. In $\C_X$, these hexagonal faces are identified
via the face identifications of the given triangulation $\T$ to a
hexagonal face of a truncated-tetrahedron or to a hexagonal face of
truncated-prism. If we follow a sequence of such identifications
through hexagonal faces of truncated-prisms, we trace out a
well-defined arc that terminates at an identification with a
hexagonal face of a truncated-tetrahedron or possibly does not
terminate but forms a complete cycle through hexagonal faces of
truncated-prisms. We call a collection of
truncated-prisms identified in this way a {\it chain}. If a chain
ends in a truncated-tetrahedra, we say the chain {\it terminates};
otherwise, we call the chain a {\it cycle of truncated-prisms}.

Just as in \cite{jac-rub-inflate}, under appropriate conditions, we can construct an ideal triangulation of $\open{X}$ using a controlled crushing of the cells of $\C_X$.   In particular, to obtain the desired  ideal triangulation of $\open{X}$ it is sufficient that  
$X\ne \bbb{P}(\C_X)$ or in the more general case $X\ne \bbb{P}(X)$ (there are not too many product blocks) and there are no
cycles of truncated-prisms (there are not too many truncated-prisms).  As a result of the crushing,  each component of $S$ is
crushed to a point (distinct points for distinct components), all designated 
products are crushed to arcs and, in particular, the products $K_i\times I$ are crushed to arcs (edges) so that if
$K_i\times I$ is crushed to the edge $e_i$, then the crushing
projection coincides with the projection of $K_i\times I$ onto the
$I$ factor.  Vertical edges, trapezoids, and product
blocks in $\C_X$ are identified to edges in the ideal triangulation. Truncated-prisms becomes faces
and truncated-tetrahedra become  tetrahedra.  Consult \cite{jac-rub-0eff} and see Figure \ref{f-cell-decomp}.

The crushing is particularly nice in the case that $\bbb{P}(\C_X)$ is a trivial combinatorial product, $X\ne \bbb{P}(\C_X)$, and there are no cycles of truncated prisms.  In this case, suppose $\{\overline{\Delta}_1,\ldots,\overline{\Delta}_n\}$ denotes the
collection of truncated-tetrahedra in $\C_X$.  Each
truncated-tetrahedron in $\C_X$ has its triangular faces in $S$. If we
crush each such triangular face of a truncated-tetrahedron to a
point (for the moment, distinct points for each triangular face), we
get a tetrahedron. We use the notation $\td{\Delta}_i^*$ for the
tetrahedron coming from the truncated-tetrahedron
$\overline{\Delta}_i$ after identifying  the triangular faces of
$\overline{\Delta}_i$ to points.  Also as a consequence of this crushing of $S$, if $\overline{\sigma}_i$ is a
hexagonal face in $\overline{\Delta}_i$, then $\overline{\sigma}_i$
is identified to a triangular face, say $\td{\sigma}_i^*$, of
$\td{\Delta}_i^*$.

Let $\mathbf{\bf{\td{\Delta}^*}} =
\{\td{\Delta}_1^*,\ldots,\td{\Delta}_n^*\}$ be the tetrahedra
obtained from the collection of truncated-tetrahedra
$\{\overline{\Delta}_1,\ldots,\overline{\Delta}_n\}$ following the
crushing of the normal triangles in the surface $S$ to points. It
follows that there is a family $\mathbf{\Phi}^*$ of face-pairings
induced on the collection of tetrahedra
$\mathbf{\bf{\td{\Delta}^*}}$ by the face-pairings of $\C_X$ (coming
from the face-pairings of $\T$) as follows:
\begin{itemize}\item[-] if the face $\overline{\sigma}_i$ of
$\overline{\Delta}_i$ is paired with the face $\overline{\sigma}_j$
of $\overline{\Delta}_j$, then this pairing induces the pairing of
the face $\td{\sigma}_i^*$ of $\td{\Delta}_i^*$  with the face
$\td{\sigma}_j^*$ of $\td{\Delta}_j^*$ ;\item [-] if the face
$\overline{\sigma}_i$ of $\overline{\Delta}_i$ is paired with a face
of a truncated-prism in a chain of truncated-prisms and the face
$\overline{\sigma}_j$ of the truncated-tetrahedron
$\overline{\Delta}_j$ is also paired with a face of this chain of
truncated-prisms, then the face $\td{\sigma}_i^*$ of
$\td{\Delta}_i^*$ has an induced pairing with the face
$\td{\sigma}_j^*$ of $\td{\Delta}_j^*$ through the chain of
truncated-prisms.\end{itemize}

Hence, we get a $3$--complex
$\boldsymbol{\td{\Delta}}^*/\boldsymbol{\td{\Phi}}^*$, which is a
$3$--manifold except, possibly, at its vertices. We will denote the
associated ideal triangulation by $\T^*$. We call $\T^*$ the ideal
triangulation obtained by {\it crushing the triangulation $\T$ along
$S$}. We denote the image of a tetrahedron $\td{\Delta}^*_i$ by
$\Delta^*_i$ and, as above, call  $\td{\Delta}^*_i$ the lift of
$\Delta^*_i$.

We have the following  version of the Fundamental Theorem for Crushing Triangulations along a  Normal Surface.  A more general version and its proof appear in \cite{jac-rub-0eff}.  

\begin{thm}\label{combinatorial-crush} Suppose $\T$ is a triangulation of a compact, orientable $3$--manifold
or an ideal triangulation of the interior of a compact, orientable
$3$--manifold $M$. Suppose $S$ is a closed normal surface embedded in
$M$, $X$ is the closure of a component of the complement of $S$,
and $X$ does not contain any vertices of $\T$. If
\begin{enumerate}
\item[i)] $X\ne \bbb{P}(\C_X)$ \item[ii)] $\bbb{P}(\C_X)$ is a trivial
product region for $X$,  and \item[iii)] there are no cycles of
truncated prisms in $X$,  \end{enumerate}
then the triangulation $\T$ can be crushed along $S$ and the ideal triangulation $\T^*$ obtained by crushing $\T$ along $S$ is
an ideal triangulation of $\open{X}$.
\end{thm}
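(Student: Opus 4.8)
The plan is to examine the crushing quotient map $c\colon X\to X^{*}$ directly, separating a combinatorial claim (that $\T^{*}$ is a well-defined ideal triangulation) from a topological claim (that the underlying pseudo-manifold, with its ideal vertices deleted, is $\open{X}$). Condition (iii) is exactly what makes the combinatorics work: with no cycles of truncated-prisms, every chain of truncated-prisms terminates at both ends, and those ends are hexagonal faces of truncated-tetrahedra $\overline{\Delta}_{i},\overline{\Delta}_{j}$; so the recipe given above matches the triangular faces $\td{\sigma}_{i}^{*},\td{\sigma}_{j}^{*}$ of $\td{\Delta}_{i}^{*},\td{\Delta}_{j}^{*}$ in well-defined pairs, producing the face-pairing family $\boldsymbol{\td{\Phi}}^{*}$. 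Condition (i), $X\ne\bbb{P}(\C_X)$, guarantees at least one truncated-tetrahedron, so $\boldsymbol{\td{\Delta}}^{*}\ne\emptyset$ and $\T^{*}$ is a genuine, nonempty ideal triangulation; orientability is inherited since every identification reverses orientation. One then records that each component of $S$ on the frontier of $X$ is crushed to a single point whose link is a copy of that component (so, provided $S$ has no $2$--sphere components, the resulting vertex indices are $\ge 1$, as required of an ideal triangulation).

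For the topological claim I would factor $c$ as $X\xrightarrow{\,c_{1}\,}X_{1}\xrightarrow{\,c_{2}\,}X^{*}$, where $c_{1}$ collapses precisely the cells that crushing lowers in dimension and $c_{2}$ crushes $S$ to points. Concretely, $c_{1}$ collapses each chain of truncated-prisms --- a product $(\text{hexagon})\times I$ by (iii) --- onto an end hexagon, and it collapses the trivial combinatorial product $\bbb{P}(\C_X)$ by sending each component $K_{i}\times I$ to its spine arc $\{pt\}\times I$, compatibly with the collapse of its ends $K_{i}\times\{0,1\}\subset S$. The map $c_{2}$ then crushes each component of (the image of) $S$ to a point. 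For $c_{2}$ the argument is routine: crushing a closed surface to a point yields a space that is a manifold away from that point, and deleting the point merely removes a collar, so $X^{*}\setminus\{\text{ideal vertices}\}$ is homeomorphic to $X_{1}\setminus(\text{image of }S)$ with the claimed links. The substance is that $c_{1}$ is a homeomorphism on interiors, and this is where conditions (ii) and (iii) enter essentially: since $\bbb{P}(\C_X)$ is a \emph{trivial} combinatorial product, each $K_{i}$ is cell-like (a simply connected planar, hence collapsible, complex) and $K_{i}\times I$ is an honest product, so it is a regular neighborhood in $X$ of its spine arc relative to the boundary pattern inherited from $\C_X$, and collapsing it to that arc does not change the PL homeomorphism type; collapsing a terminating chain of truncated-prisms is likewise an elementary collapse. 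An equivalent, more hands-on route is to build the homeomorphism $\open{X}\to X^{*}\setminus\{\text{ideal vertices}\}$ cell by cell: send $\overline{\Delta}_{i}$, minus open neighborhoods of its triangular faces, to $\td{\Delta}_{i}^{*}$, minus open neighborhoods of its vertices, and absorb the product blocks and prism-chains into regular neighborhoods of the edges and faces of $\T^{*}$.

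I expect the step $c_{1}$ to be the main obstacle --- specifically, proving that collapsing the combinatorial product and the prism-chains is a homeomorphism on interiors rather than merely a homotopy equivalence, and checking that this is exactly what hypotheses (ii) and (iii) supply. Each way the hypotheses could fail corresponds to a failure mode of the collapse: a $K_{i}$ with $\pi_{1}\ne 1$ or a non-planar $K_{i}$ would let the collapse kill a loop, a twisted $I$--bundle component could not be collapsed to an arc consistently, and a cycle of truncated-prisms would either create a non-manifold point or quotient out a $\pi_{1}$--generator; condition (i) rules out the degenerate case where $X$ collapses to a graph. Since a more general form of this statement, with complete proof, is established in \cite{jac-rub-0eff} (and in the form closest to the present combinatorial setup in \cite{jac-rub-inflate}), the cleanest write-up is to reduce to that theorem: verify that (i)--(iii) imply its hypotheses and that its conclusion specializes to ``$\T^{*}$ is an ideal triangulation of $\open{X}$.''
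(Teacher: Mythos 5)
The paper offers no proof of this theorem at all: it states explicitly that ``a more general version and its proof appear in'' \cite{jac-rub-0eff}, so the paper's ``approach'' is precisely the reduction you recommend in your final sentence. Your additional direct sketch is broadly sound and correctly isolates the crux, namely that collapsing the trivial combinatorial product and the terminating prism chains is a homeomorphism on interiors rather than merely a homotopy equivalence; that is indeed where the work lies in \cite{jac-rub-0eff}. One caution on your justification of that step: $K_i\times I$ is generally not a regular neighborhood of its spine arc, since $K_i$ is only a cell-like (simply connected planar) complex and may have $1$--dimensional parts, so ``collapsing a regular neighborhood'' is not quite the right mechanism. The rigorous backing is decomposition-space theory: the point-inverses of the crushing map are the cell-like sets $K_i\times\{t\}$, the hexagon cross-sections of terminating chains, and the components of $S$, and one invokes Armentrout/Siebenmann-type results for cell-like maps of $3$--manifolds --- exactly the machinery the paper itself uses for the $2$--dimensional analogue in Lemma \ref{crush normal to normal}. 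Your observations that (i) guarantees $\boldsymbol{\td{\Delta}}^*\ne\emptyset$, that (iii) makes the induced face-pairings well defined through terminating chains, and that non-sphere components of $S$ yield ideal vertices of index $\ge 1$ are all consistent with the paper's setup. So: correct in outline, with the clean write-up being the citation the paper itself uses, and the direct argument needing the cell-like map formalism to be made airtight.
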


In this situation, we say the triangulation $\T$ admits a \emph{combinatorial crushing along $S$}. Notice that in the case of a combinatorial crushing, the tetrahedra in the ideal triangulation $\T^*$ are in one-one correspondence with the truncated tetrahedra in the cell decomposition $\C_X$ of $X$.  The latter come from truncating a sub collection of the tetrahedra of $\T$ and can be thought of as actually being a sub collection of the tetrahedra of $\T$; the face identifications for $\T^*$ are induced by the face identifications of $\T$.

\subsection{Inflating ideal triangulations}

A triangulation $\T$ of the compact $3$--manifold $M$ is said to be
a {\it minimal-vertex triangulation} if for any other triangulation
$\T_1$ of $M$ the number of vertices of $\T$ is no more than the
number of vertices of $\T_1$, $\abs{\T^{(0)}}\le\abs{\T_1^{(0)}}$.
If $M$ is closed, then $M$ has a one-vertex triangulation; hence, a minimal-vertex triangulation of $M$ is a
one-vertex triangulation \cite{jac-rub-layered}. If $M$ is a
compact $3$--manifold with boundary, no component of which is a
$2$--sphere, then $M$ has a triangulation with all of its
vertices in the boundary and then just one vertex in each boundary
component; hence, for such a manifold a minimal-vertex triangulation has all the vertices in the boundary and then just one vertex in each boundary component \cite{jac-rub-0eff}. These are the triangulations we are interested in and rather than write all of this out, we just say minimal-vertex triangulation.

 If $M$ is a compact 3--manifold with boundary and $\T$ is a triangulation of $M$ with normal boundary, then if $\T$ admits a crushing along the normal boundary, we say $\T$ can be \emph{crushed along $\bdy M$}. 
 
 If $S$ is a triangulated surface, we say that a subcomplex $\xi$ in the 1--skeleton of the triangulation of $S$ is a \emph{frame} in $S$, if $\xi$ is a spine for $S$ (its complement is a connected open cell in $S$)  and is a minimum among spines, with respect to set inclusion.  In any triangulation of $S$ there are many choices for a frame.  See Figure \ref{f-frames} for examples of frames in a torus.   A vertex in a frame is called a \emph{branch} or \emph{branch point}  if it has index greater than two.  The closure of a component of a frame minus its branch points is called a \emph{branch}.  For the examples in Figure \ref{f-frames}, that on the left has one branch point of index 4 and two branches while that on the right has two branch points, each of index 3, and three branches.

\begin{figure}[htbp]

            \psfrag{L}{$\xi$}
        \vspace{0 in}
        \begin{center}
\epsfxsize =2.75 in \epsfbox{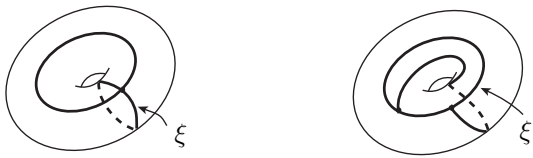} \caption{There are 
only two possible topological types for frames in a triangulation of the torus.} \label{f-frames}
\end{center}
\end{figure}

 \begin{defn} If $\T^*$ is an ideal triangulation of $\open{X}$, the interior of the compact 3--manifold $X$, an \emph{inflation of $\T^*$} is a minimal-vertex triangulation $\T$ of $X$ with normal boundary that admits a \underline{combinatorial crushing} along $\bdy X$ for which the ideal triangulation obtained by crushing $\T$ along $\bdy X$ is the ideal triangulation $\T^*$ of $\open{X}$. \end{defn}

A construction for inflations of ideal triangulations of 3--manifolds is developed  in \cite{jac-rub-inflate}.  We discuss the construction here but reference the reader to \cite{jac-rub-inflate} for complete details. For a given ideal triangulation of the interior of a compact 3--manifold, there is not a unique inflation; however, all inflations of a given ideal triangulation share many common properties, some of which play a crucial role in this work.

Our construction begins with the choice of a ``frame" in the 1--skeleton of the induced triangulation of each vertex-linking surface of $\T^*$.  If $v^*$ is an ideal vertex of $\T^*$, we will use the notation $S_{v^*}$ for the vertex-linking surface of $v^*$ and $\xi$ for a frame in $S_{v^*}$. If there are a number of ideal vertices,  then for an ideal vertex $v_i^*$ we use $S_{v_i^*}$ for the vertex-linking surface and $\xi_i$ for a frame in $S_{v_i^*}$. We let $\Lambda = \xi_1\cup\xi_2\cup\cdots\cup\xi_k$ denote the union of the frames from all the vertex-linking surfaces.

 An inflation of an ideal triangulation $\T^*$ of $\open{X}$ includes all the tetrahedra of $\T^*$ and then, guided by the frame $\Lambda$, new tetrahedra are added to the tetrahedra of $\T^*$ and new  face identifications  are determined (discarding some of the face identifications of $\T^*$, using some of the face identifications of $\T^*$, and adding some new face identifications)  to arrive at a minimal-vertex triangulation $\T_\Lambda$ of $X$.  The triangulation $\T_\Lambda$ will have normal boundary that admits a combinatorial crushing of $\T_\Lambda$ along  $\bdy X$, crushing $\T_\Lambda$ back to the ideal triangulation  $\T^*$.  Figure \ref{f-inflate-scheme-lite-ann} provides a schematic for going between an ideal triangulation $\T^*$ of $\open{X}$ and a minimal-vertex triangulation $\T_\Lambda$ of $X$.

 \begin{figure}[htbp]

            \psfrag{X}{$(X,\T_\Lambda)$}\psfrag{Y}{$(\open{X},\T^*)$}
            \psfrag{1}{\tiny {$v_1^*$}} \psfrag{2}{\tiny {$v_2^*$}} \psfrag{3}{\tiny {$v_3^*$}}
            \psfrag{a}{\tiny {$\xi_1$}} \psfrag{b}{\tiny {$\xi_2$}} \psfrag{c}{\tiny {$\xi_3$}}
            \psfrag{x}{\tiny {$B_{\tiny {v_1}}$}} \psfrag{y}{\tiny {$B_{\tiny {v_2}}$}}\psfrag{z}{\tiny {$B_{\tiny {v_3}}$}}
            \psfrag{A}{\scriptsize {$S_{\tiny {v_1^*}}$}}\psfrag{B}{\scriptsize {$S_{\tiny {v_2^*}}$}}
            \psfrag{C}{\scriptsize {$S_{\tiny {v_3^*}}$}}
            \psfrag{u}{\scriptsize {$S_{\tiny {v_1}}$}}\psfrag{v}{\scriptsize {$S_{\tiny {v_2}}$}}
            \psfrag{w}{\scriptsize {$S_{\tiny {v_3}}$}}
            \psfrag{m}{\scriptsize
            {\bf Inflate}} \psfrag{n}{\scriptsize
            {\bf Crush}}
            \psfrag{L}{$\Lambda =\xi_1\cup\xi_2\cup\xi_3$}

        \vspace{0 in}
        \begin{center}
        \epsfxsize=4.5 in
        \epsfbox{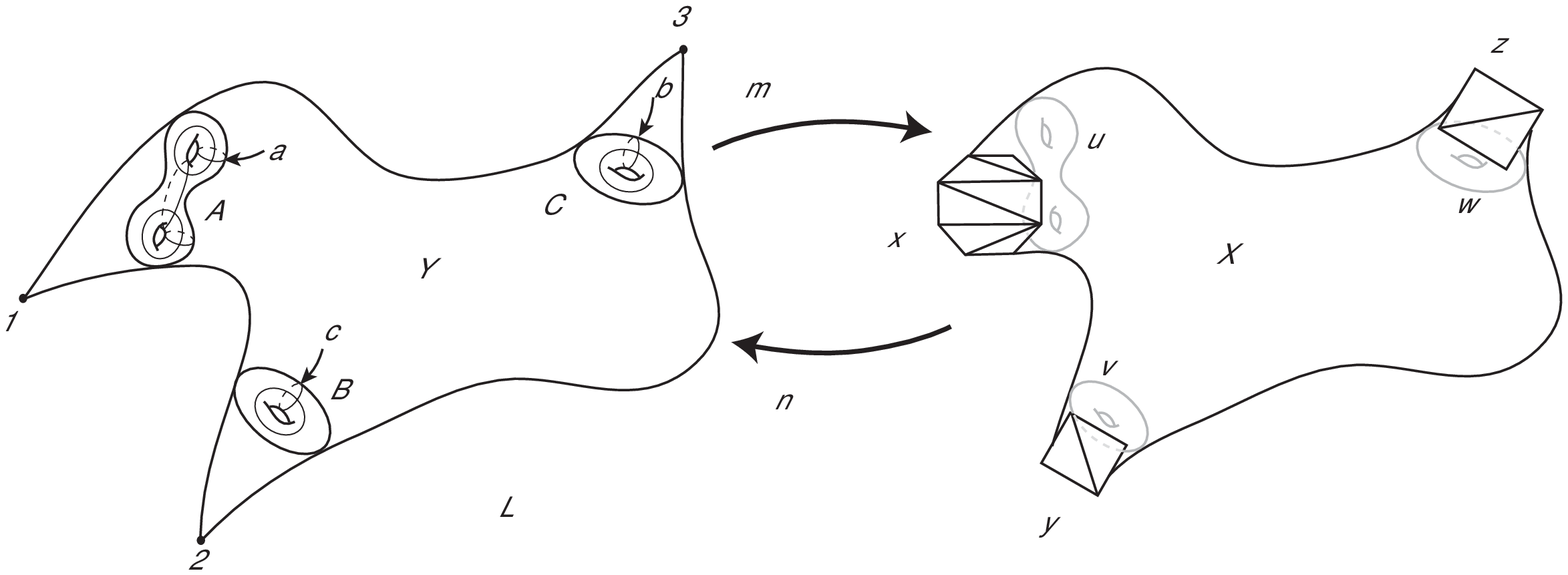}\caption{ Schematic of an inflation of an ideal triangulation using the collection of frames in $\Lambda$}
        \label{f-inflate-scheme-lite-ann}
\end{center}
\end{figure}

 An ideal vertex $v^*$ in $\T^*$ inflates to a minimal (one-vertex) triangulation of a component $B_v$ of $\bdy X$, which is induced by $\T_\Lambda$.   The vertex-linking surface $S_{v^*}$ about the ideal vertex $v^*$  inflates to a normal boundary $S_v$ in the triangulation $\T_\Lambda$, which is boundary-linking $B_v$.

\subsection{Closed normal surfaces} In \cite{jac-rub-inflate} a
one-one correspondence is given between the closed normal surfaces
in an ideal triangulation $\T^*$ and the closed normal surfaces in
any inflation $\T$  of $\T^*$. A special case of this relationship
is a key ingredient for the work in this paper and relates  boundary
parallel normal surfaces in an inflation of an ideal triangulation
with normal surfaces parallel to vertex-linking surfaces in the
ideal triangulation. We provide the details for this special case in
this section.

\begin{lem} Suppose $M$ is a compact $3$--manifold with nonempty
boundary and $\T$ is a triangulation of $M$ with normal boundary. An
embedded normal surface in $\T$ that contains all the quad types of
a boundary-linking surface has that boundary-linking surface as a
component.\end{lem}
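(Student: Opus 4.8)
The plan is to exhibit, inside $S$, a closed (compact, boundaryless) subsurface $B'$ that is normally isotopic to the boundary-linking surface $B$; since a closed subsurface of a surface is a union of its connected components, and $B$ is connected over each component of $\bdy M$, this gives the conclusion. The first ingredient is the purely local structure of $B$. Writing $N\cong\bdy M\times I$ for the thin regular neighbourhood of $\bdy M$ whose frontier is $B$, one reads off that in each tetrahedron $\Delta$ the disks of $B\cap\Delta$ are dictated by $\bdy M\cap\Delta$: a tetrahedron meeting $\bdy M$ in one face contributes a single normal triangle of $B$ parallel to that face; a tetrahedron meeting $\bdy M$ in two faces (which always share an edge $e$) contributes a single normal quadrilateral of $B$, namely the one separating $e$ from the opposite edge; and the cases of three or four boundary faces again contribute only triangles parallel to boundary faces. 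In particular $B$ has at most one quadrilateral per tetrahedron, of a determined type, and a tetrahedron carries a quadrilateral of $B$ precisely when it has two faces in $\bdy M$.

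The second ingredient is embeddedness. An embedded normal surface carries quadrilaterals of at most one type in each tetrahedron, so the hypothesis that $S$ uses every quad type of $B$ forces, in each tetrahedron where $B$ has a quadrilateral, that every quadrilateral of $S$ there is parallel to it. Using the standard picture of the disks of an embedded normal surface in a tetrahedron, I would then follow $S$ outward from these quadrilaterals toward $\bdy M$: beginning with the quadrilateral of $S$ nearest the boundary edge $e$ in each such tetrahedron and crossing faces, one reconstructs, disk by disk, a subcomplex $B'$ built from full normal disks of $S$ and glued together exactly as the disks of $B$ are glued. Where $B$ has a quadrilateral the continuation is immediate from the matching of quad types; where $B$ has only a boundary-parallel triangle it is forced by normality, the disk of $S$ facing $\bdy M$ along that boundary face having to be a triangle parallel to it. The assembled $B'$ is then a closed subsurface of $S$, hence a union of connected components of $S$, and by construction its normal coordinate vector equals that of $B$, so $B'$ is normally isotopic to $B$. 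Hence each component of $B$ is a component of $S$.

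The step I expect to be the main obstacle is the ``following'' in the second paragraph: making sense of the sheet of $S$ facing $\bdy M$ and checking that it propagates consistently through the tetrahedra on which $B$ carries no quadrilateral — where the hypothesis gives no direct control — and that it closes up rather than running off along $\bdy M$ (a genuine concern when $\bdy S\ne\emptyset$). I would settle this by normal-coordinate bookkeeping: the quadrilaterals of $B$, all present in $S$ by hypothesis, pin down the $\bdy M$-facing sheet of $S$ at every edge of $\bdy M$ that is interior to $M$, and the patterns of normal arcs that $S$ and $B$ cut on the triangles of $\bdy M$ then force this sheet of $S$ to coincide, face by face, with $B$ along the boundary faces in between — so that it has exactly the normal coordinates of $B$, which is what the reconstruction needs.
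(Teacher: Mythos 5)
There is a genuine gap, and it sits exactly where you flagged it: the propagation through the tetrahedra on which $B$ carries no quadrilateral. The hypothesis controls $S$ only in tetrahedra where $B$ has a quad; elsewhere you must manufacture the triangles of $B$ inside $S$, and the mechanism you offer does not do it. Your claim that ``the disk of $S$ facing $\bdy M$ along a boundary face \dots\ \mbox{[is]} forced by normality \dots\ to be a triangle parallel to it'' is false: a thin edge-linking annulus about a boundary edge $e$ presents a \emph{quadrilateral} (separating $e$ from the opposite edge) to the very tetrahedra that contain a boundary face at $e$, so the sheet of $S$ nearest a boundary face need not be a parallel triangle. More generally, a normal arc of $S$ on an interior face that matches an arc of a $B$-type disk can be capped off on the other side either by the triangle $B$ uses \emph{or} by a quadrilateral of a type $B$ does not use, and nothing in your ``normal-coordinate bookkeeping'' excludes the latter; so the reconstruction of $B'$ can stall or go astray. (A secondary error: your local description of $B$ is wrong --- the normal boundary has quadrilaterals in \emph{every} tetrahedron wrapped around a boundary edge that does not contain a boundary face at that edge, and vertex-linking triangles at boundary vertices of tetrahedra with no higher-dimensional contact with $\bdy M$; quads are not confined to tetrahedra with two faces in $\bdy M$. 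This actually gives you more constraints, but it shows the case analysis underpinning the reconstruction is not reliable.)

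The paper avoids all of this local combinatorics by working in the space of normal solutions: since $S$ is embedded and carries every quad type of $B$, the two surfaces have compatible quadrilaterals and lie in a common face of the solution space, with $B$ possibly in a proper face of the carrier of $S$; hence one can write $kS = nR + m\td{B}$ for a normal surface $R$ and positive integers $k,n,m$. Because $\td{B}$ is the frontier of an arbitrarily small neighborhood of $\bdy M$, it can be normally isotoped off $R$, so the sum is a disjoint union and $\td{B}$ appears as a component of $kS$, hence of $S$. If you want to salvage a disk-by-disk argument, the honest version is an induction outward from the quadrilaterals of $B$ using the matching equations face by face, with an explicit argument ruling out the ``capped off by a foreign quad'' alternative in each configuration; as written, your proposal asserts the conclusion of that induction rather than proving it.
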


\begin{proof} Suppose $S$ is an embedded normal surface in $\T$, $\td{B}$
is a boundary-linking surface, and all quad types of $\td{B}$ are
represented as quad types in $S$. Then $S$ and $\td{B}$ are
contained in the carrier of $S$, a face of compatible (no two
distinct quad types in the same tetrahedron) normal solutions in the
solution space of embedded normal surfaces. It may be the case that
 $\td{B}$ is in a proper
face. Since, $\td{B}$ has no more quad types that $S$, it follows
that there is a normal surface $R$ and positive integers $k,n,$ and
$m$ so that $kS = nR + m\td{B}$. However, we can move $\td{B}$ by a
normal isotopy so that it does not meet $R$. Hence, we have $\td{B}$
a component of $kS$ and therefore a component of $S$.\end{proof}

\begin{lem}\label{crush normal to normal} Suppose $M$ is a compact $3$--manifold with nonempty boundary, no component of which is a 2--sphere. Suppose
 $\T^*$ is an ideal triangulation of
$\open{M}$, and $\T$ is an inflation of $\T^*$.  The combinatorial crushing map determined by crushing $\T$ along $\bdy M$ takes a closed normal
surface $S$ in $\T$  to a closed normal surface $S^*$ in
$\T^*$; furthermore, $S$ and $S^*$ are homeomorphic.
\end{lem}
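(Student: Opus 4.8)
The plan is to derive this from the explicit local form of the combinatorial crushing map; it is the ``crushing'' half of the normal--surface correspondence with $\T^*$ alluded to above. Write $c\colon M\to X^*$ for the map realizing the crushing of $\T$ along the normal boundary $S_\partial=\bigcup_v S_v$, where $X^*$ is $M$ with each boundary component crushed to its ideal vertex, so that $X^*$ is the pseudo-manifold of $\T^*$; in the notation of Theorem~\ref{combinatorial-crush} the core piece $X$ --- the closure of the component of $M\setminus S_\partial$ containing no vertex of $\T$ --- is homeomorphic to $M$. I would first recall from Section~3 that $c$ collapses each $S_v$ to an ideal vertex, collapses the combinatorial product region $\bbb{P}(\C_X)$ and every chain of truncated--prisms fibrewise (a product $K_i\times I$ to the edge $e_i$ by projection onto the $I$ factor), and is a homeomorphism off this collapsed set; and that on a truncated--tetrahedron $\overline{\Delta}_i$ it is the quotient collapsing the four truncation--triangles to the four vertices of $\Delta_i^*$, carrying a normal triangle or quadrilateral of $\Delta_i$ lying in $\overline{\Delta}_i$ to a normal triangle or quadrilateral of the corresponding type in $\Delta_i^*$. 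Because $\T$ is an inflation, $S_\partial$ is boundary--linking, the core $X$ is exactly the union of the truncated--tetrahedra and the truncated--prism chains, and on $X$ the only collapsing done by $c$ is of truncation--triangles and of prism chains (the latter to single face--identifications of $\T^*$).

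Next I would position $S$. As $S$ is closed it misses $\bdy M$ and the vertices of $\T$; since the regular neighborhood of $\bdy M$ defining $S_\partial$ may be taken arbitrarily thin without affecting $\T^*$ or the homeomorphism type of $c(S)$, I may assume $S\cap S_\partial=\emptyset$, so $S\subset X$. I would then read off $c(S)$ cell by cell of $\C_X$: in each truncated--tetrahedron $\overline{\Delta}_i$, $S$ meets $\overline{\Delta}_i$ in normal triangles and quadrilaterals of $\Delta_i$ missing the truncation corners, each sent by $c$ to a normal triangle or quadrilateral of $\Delta_i^*$; where $S$ passes through a chain of truncated--prisms it should do so in product bands $\alpha\times I$ with $\alpha$ a normal arc, each collapsed by $c$ to a normal arc on a face $\td{\sigma}_i^*$ of $\T^*$; and any piece of $S$ meeting a triangular or quadrilateral product block should be a sub--disk of a normal triangle, collapsed by $c$ to a point. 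The face--matching conditions for $c(S)$ across the faces $\td{\sigma}_i^*$ will then be exactly the matching conditions for $S$ across the hexagonal faces of the $\overline{\Delta}_i$ and across the prism chains, and embeddedness will pass to $c(S)$ because $c$ is injective off the collapsed set, which $S$ meets only in product bands and collapsed disks; so $S^*:=c(S)$ is a closed normal surface in $\T^*$.

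For the homeomorphism $S\cong S^*$, I would observe that $c|_S\colon S\to S^*$ is onto and, by the previous step, fails to be injective only in collapsing finitely many disjoint sub--disks of $S$ to points (the product--block pieces) and finitely many disjoint product bands of $S$, each a bicollar of a properly embedded arc, onto those arcs (the prism--chain pieces); both operations preserve the homeomorphism type of a closed surface, so $c|_S$ is a continuous bijection of compact surfaces and hence a homeomorphism. The hard part will be the claim in the second step that $S$ meets the product blocks only in disks and threads each prism chain straight across --- equivalently, that $c$ collapses no subsurface of $S$ to a $1$--complex, which would happen if $S$ contained an essential annulus or M\"obius band running longitudinally through a product region. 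This is where being an inflation is used decisively: the conditions defining a combinatorial crushing, Theorem~\ref{combinatorial-crush}(i)--(iii) --- in particular the absence of cycles of truncated--prisms and the triviality of $\bbb{P}(\C_X)$ --- forbid such longitudinal threading, and I would rule out the residual possibilities inside the boundary blocks of the inflation from their product $I$--bundle structure together with the hypothesis that no component of $\bdy M$ is a $2$--sphere, by a barrier/maximality argument like that in the proof of Theorem~\ref{determine-bdry-linking} applied within each block.
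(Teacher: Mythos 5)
Your overall strategy is the paper's: place $S$ inside the core $X$, disjoint from the boundary-linking surfaces, read off the image cell by cell of $\C_X$ (normal disks in truncated tetrahedra go isomorphically to normal disks, pieces in chains of truncated prisms go to normal arcs in a face, pieces in product blocks go to points in an edge), and then argue that the collapse does not change the homeomorphism type. Two points, however, need repair. First, your homeomorphism step is misstated: $c|_S$ is not a ``continuous bijection'' --- it collapses positive-dimensional sets --- and the collapsed pieces over a vertex of $S^*$ are not single sub-disks but entire horizontal cross-sections $K_i\times\{t\}$ of a component $K_i\times I$ of the combinatorial product, i.e.\ contractible planar complexes that need not be disks (likewise the preimage of a point of an edge of $S^*$ can be a whole sequence of arcs threading a chain of truncated prisms). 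The correct statement is that every point-preimage is cell-like, so $c|_S$ is a cell-like map of compact surfaces, and one invokes the two-dimensional cell-like approximation theorem (Armentrout, Siebenmann), which is exactly what the paper does; your ``collapsing disjoint disks and bicollared bands preserves the homeomorphism type'' is the right intuition but does not literally cover the configurations that occur. Second, what you flag as ``the hard part'' is not a difficulty, and the barrier/maximality argument you propose is the wrong tool: a normal disk of $\T$ lying in a product block is trapped between two parallel normal disks of the boundary-linking surface of the same type and is therefore itself a parallel (horizontal) cell --- this is forced by normality of $S$ together with disjointness from the boundary-linking surface alone --- while the absence of cycles of truncated prisms is already part of the definition of a combinatorial crushing and hence of an inflation, so no longitudinal threading of $S$ through a product region can occur and there is nothing residual to rule out inside the boundary blocks.
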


\begin{proof}  Let $X$ denote the component of the complement of the
boundary-linking surfaces that does not meet $\bdy M$. Then $X$
contains none of the vertices of $\T$ and has a nice
cell-decomposition $\mathcal{C}$; furthermore, this
cell-decomposition combinatorially crushes along the boundary-linking surfaces to
the ideal triangulation $\T^*$.

Let $S$ be a closed normal surface in $\T$. The surface $S$ has an induced cell-decomposition from $\T$
consisting of normal quadrilaterals and normal triangles. Since $S$
is a closed normal surface, we may assume $S$ does not meet any of
the boundary-linking surfaces along which we are crushing, and thus
$S\subset X$.

If a normal quad or normal triangle of $S$ is in a
truncated-tetrahedron in $\C$, then upon crushing, the
truncated-tetrahedron is taken to a tetrahedron of $\T^*$ and the
normal cells of $S$ in the truncated-tetrahedron are carried
isomorphically onto normal cells in $\T^*$ (see Figure
\ref{f-crush-normal} A). If a normal quad or normal triangle of $S$
is in a truncated-prism of $\C$, then the truncated-prism is crushed
to a face in $\T^*$ and the normal cells of $S$ are crushed to
normal arcs in that face. The normal arcs in the hexagonal faces of
the truncated-prisms correspond to where $S$ meets these hexagonal faces and are matched under the crushing map from the various truncated prisms in a chain of truncated prisms. Arcs in
the trapezoidal faces of the truncated-prism crush to points in the edges of the face in which the truncated prism crushes (see
Figure \ref{f-crush-normal} B). Finally, the normal cells of $S$ in
the product blocks of $\C$ are ``horizontal" triangles in the
triangular product blocks and ``horizontal" quadrilaterals in the
quadrilateral blocks and, hence, each is crushed to a single point
in an edge of $\T^*$ (see
Figure \ref{f-crush-normal} C). The crushing in the trapezoidal faces of the
truncated-prisms and the product blocks are consistent. It follows
that the image of $S$ is formed from the collection of normal
triangles and normal quadrilaterals of $S$ that are in the
truncated-tetrahedron of $\C$ by identifications along their edges
and gives a normal surface $S^*$ in $\T^*$.

 \begin{figure}[htbp]

        \vspace{0 in}
        \begin{center}
        \epsfxsize=3.5 in
        \epsfbox{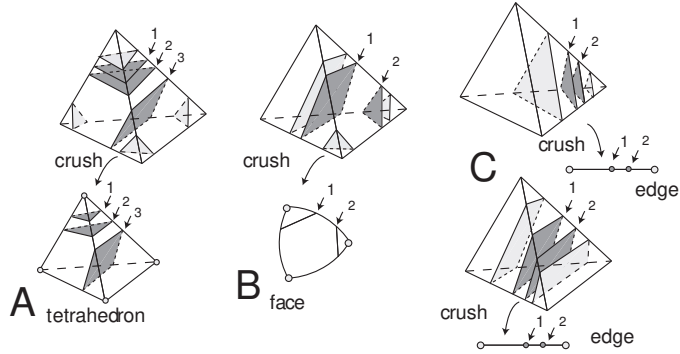}\caption{A.Normal disks in truncated tetrahedra go to normal disks. B.Normal disks in truncated prisms go to normal arcs. C.Normal disks in product blocks go to points. }
        \label{f-crush-normal}
\end{center}
\end{figure}

To see that $S$ and $S^*$ are homeomorphic, we observe that the
inverse image of a point in the interior of a normal quad or normal
triangle in $S^*$ is a point in the interior of a normal quad or
normal triangle in $S$. The inverse image of a point in an edge of
$S^*$ is either a point in an edge of $S$ or a sequence of arcs in normal quads or normal triangles of $S$; there are no
cycles of truncated prisms and so no cycles of cells of $S$ in
truncated-prisms. The inverse image of a vertex of $S^*$ is a horizontal cross section $K_i\times {t}$ in one of the component product pieces $\mathbb{P}_i = K_i\times I$ of the combinatorial product $\mathbb{P}(\C_X)$. Hence, $K_i$ is a contractible planar complex.  Thus for each point of $\S^*$ its inverse image in $S$ is a contractible planar complex and so the combinatorial crushing map gives a cell-like map from $S$ to $S^*$ and by a 2-dimensional versions of \cite{arm, sie} it follows that $S$ and $S^*$ are homeomorphic.
\end{proof}

\begin{thm} \label{bijection-ideal-inflate} Suppose $M$ is a compact 3--manifold with nonempty boundary no component of which is a 2--sphere. Suppose
 $\T^*$ is an ideal triangulation of
$\open{M}$, and $\T$ is an inflation of $\T^*$.  The combinatorial crushing map determined by crushing $\T$ along $\bdy M$ induces a bijection between the closed normal
surfaces  in $\T$  and the closed normal surface in
$\T^*$; furthermore, corresponding surfaces are homeomorphic.\end{thm}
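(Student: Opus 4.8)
The plan is to promote the crushing map of Lemma~\ref{crush normal to normal} to a bijection by exhibiting an explicit inverse; the assertion that corresponding surfaces are homeomorphic is then precisely the last sentence of Lemma~\ref{crush normal to normal}. Fix notation as in that proof: let $B$ be the union of the boundary-linking surfaces along which we crush, let $X$ be the component of the complement of $B$ not meeting $\bdy M$, and let $\C=\C_X$ be its induced cell decomposition, which combinatorially crushes to $\T^*$ (hypotheses (i)--(iii) of Theorem~\ref{combinatorial-crush} hold for $X$ with this decomposition because $\T$ is an inflation, hence admits a combinatorial crushing along $\bdy M$). Write $\Psi$ for the map sending a closed normal surface $S$ in $\T$ --- which, after a normal isotopy, we take to lie in $X$ --- to the closed normal surface $\Psi(S)=S^*$ in $\T^*$ produced by Lemma~\ref{crush normal to normal}.

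First I would construct a candidate inverse $\Phi$. Given a closed normal surface $S^*$ in $\T^*$, recall that crushing puts the tetrahedra of $\T^*$ in one-one correspondence with the truncated-tetrahedra of $\C$; so inside each truncated-tetrahedron I place the normal triangles and quadrilaterals in which $S^*$ meets the corresponding tetrahedron of $\T^*$, carried back by the isomorphism of Figure~\ref{f-crush-normal}A. Each such disk meets the hexagonal faces of its truncated-tetrahedron in normal arcs. Every hexagonal face is glued through a chain of truncated-prisms either to another hexagonal face of a truncated-tetrahedron or --- excluded by hypothesis (iii) of Theorem~\ref{combinatorial-crush} --- around a cycle; thus each chain terminates at truncated-tetrahedra at both ends, and the two arc patterns it receives match, since the two ends are identified in $\T^*$ to the single face to which the chain crushes and $S^*$ is a normal surface there. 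I then fill the chain with the product family of normal arcs spanning these arcs, fill each triangular or quadrilateral product block with the horizontal normal disk dictated by the arcs on its trapezoids, and verify that all of these choices agree along the shared trapezoids and vertical edges of $\C$. The outcome is a properly embedded normal surface $S=\Phi(S^*)$ in $X$; being disjoint from $B$ and with the cells of $\C$ contained in tetrahedra of $\T$, it is a closed normal surface with respect to $\T$.

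It remains to see that $\Psi$ and $\Phi$ are mutually inverse. That $\Psi\circ\Phi=\mathrm{id}$ is immediate from the cell-by-cell description of the crushing map in Lemma~\ref{crush normal to normal}: crushing a truncated-tetrahedron returns the normal disk of $S^*$, crushing a chain of truncated-prisms collapses the spanning arcs back to arcs in a face of $\T^*$, and crushing a product block collapses its horizontal disk to a point of an edge. For $\Phi\circ\Psi=\mathrm{id}$, take a closed normal surface $S$ in $\T$, normally isotoped to lie in $X$; the key point is that in $\C$ the surface $S$ already has the ``spread-out'' form produced by $\Phi$ --- it meets each product block in horizontal disks and each truncated-prism in product arcs. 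This is the structural fact (see \cite{jac-rub-0eff, jac-rub-inflate}) that a normal surface disjoint from $B$ cannot cross the disks of $B$ that bound a product block and so must run parallel to them. Granting it, $S$ is reconstructed from the normal disks it carries in the truncated-tetrahedra, which are exactly the disks of $S^*=\Psi(S)$, so $\Phi(\Psi(S))=S$. Hence $\Psi$ is a bijection with inverse $\Phi$, and corresponding surfaces are homeomorphic by Lemma~\ref{crush normal to normal}.

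The main obstacle is establishing that $\Phi$ is well defined: one must check that the normal arcs propagated through a chain of truncated-prisms close up consistently and are compatible with the normal disks at both terminal truncated-tetrahedra, and that the horizontal disks in neighbouring product blocks and the arcs on shared trapezoids and vertical edges fit together. This is exactly where the absence of cycles of truncated-prisms (hypothesis (iii) of Theorem~\ref{combinatorial-crush}) and the embeddedness of $S^*$ enter, and it is what guarantees that the filling of the product region is forced --- equivalently, that no two distinct closed normal surfaces of $\T$ crush to the same surface of $\T^*$.
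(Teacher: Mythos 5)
Your proposal is correct and follows essentially the same route as the paper: the paper's proof establishes injectivity and surjectivity of the crushing map by tracking normal disks through truncated tetrahedra, chains of truncated prisms, and product blocks, which is exactly the content of your explicit inverse $\Phi$ and the identities $\Psi\circ\Phi=\mathrm{id}$ and $\Phi\circ\Psi=\mathrm{id}$. The well-definedness points you flag (matching of arc patterns at the two ends of a chain via the absence of prism cycles, and the forced number of horizontal slices in each product component) are handled in the paper at the same level of detail, and the homeomorphism claim is delegated to Lemma~\ref{crush normal to normal} in both.
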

\begin{proof} By Lemma \ref{crush normal to normal} we only need to show that the combinatorial crushing induces a bijection between the closed normal surfaces of $\T$ and those of $\T^*$.  

First we shall show that the correspondence is injective. Suppose $S_1$ and $S_2$ are distinct closed normal surfaces in $\T$. Since both are closed, we may assume that (up to normal isotopy) they do not meet any boundary-linking surface in $\T$.  Let $X$ denote the component of the complement of the boundary-linking normal surfaces in $\T$, which does not meet $\bdy M$ and let $\C_X$ denote the nice cell decomposition on $X$ induced by $\T$.   Then $S_1$ and $S_2$ are distinct normal surfaces in $\C_X$  and hence, have distinct normal coordinates. By Lemma \ref{crush normal to normal} the combinatorial crushing of $\T$ to $\T^*$ takes $S_1$ and $S_2$ to closed normal surfaces $S_1^*$ and $S_2^*$ in $\T^*$, respectively.   We will show that $S_1^*$ and $ S_2^*$ have distinct normal coordinates.

If $S_1$ and $S_2$ have distinct sets of normal disks in a truncated tetrahedron of $\C_X$, then $S_1^*$ and $S_2^*$ have distinct normal disks in a tetrahedron of $\T^*$ and hence, $S_1^* \ne S_2^*$. 

If $S_1$ and $S_2$ have distinct normal disks in a truncated prism, say $\pi$, then they have distinct sets of normal arcs in a hexagonal face of $\pi$, which extend to distinct sets of normal arcs on all the hexagonal faces of the truncated prisms in the chain of truncated prisms containing $\pi$, which leads to a distinct set of normal disks in a truncated tetrahedron in which the chain terminates. This again  gives that $S_1^* \ne S_2^*$.  Finally if $S_1$ and $S_2$ have a distinct number of quads or triangles in a quadrilateral or triangular block, respectively, then $S_1$ and $S_2$ meet an entire product component, $K_i\times I$ in a distinct number of horizontal slices. The vertical frontier of a product $K_i\times I$ is made up of trapezoidal faces which are paired with trapezoidal faces of truncated prisms. Thus we have that $S_1$ and $S_2$ must meet a truncated prism in distinct normal disks. From the previous consideration, we have that $S_1^*$ and $S_2^*$ are distinct.  So, the correspondence is injective.

Now, we must show the correspondence is surjective. Suppose $S^*$ is a closed normal surface in $\T^*$.  First, we consider how, $S^*$ meets a tetrahedron of $\T^*$. Each tetrahedron of $\T^*$ is the image of a single truncated tetrahedron of $\C_X$ under the crushing map; hence, there is a unique choice of normal cells in these truncated tetrahedra of $\C_X$ (tetrahedra of $\T$) mapping to the normal cells of $S^*$. If $\alpha^*$ is a face of a tetrahedron of $\T^*$ and $\alpha^*$ meets $S^*$, then the inverse image of $\alpha^*$ is either a single face between two truncated tetrahedra in $\C_X$ or is the image of a chain of truncated prisms in $\C_X$ between two truncated tetrahedra in $\C_X$. If the inverse image of $\alpha^*$ is a single face matching two truncated tetrahedra, then there are well determined normal cells in each of these truncated tetrahedra determined by the normal cells in $S^*$. If there is a chain of truncated prisms determined by $\alpha^*$, then of the three possible families of normal arcs in $\alpha^*$, only one of the families determines quadrilaterals in any one of the truncated prisms in the chain determined by $\alpha$.  This again determines a unique way to fill in normal disks extending the normal disks in the truncated tetrahedra. Finally, for each product $K_i\times I$ there is a unique number of horizontal slices determined to complete a normal surface $S$ in $\T$ that crushes to $S^*$.
\end{proof}

Recall that any  ideal triangulation of the interior of a compact 3--manifold M with boundary, no component of which is a 2--sphere, has numerous inflations. By the previous theorem, all of these inflations have isomorphic sets of closed normal surfaces. Here we use isomorphic to mean a bijection between the sets of normal surfaces where corresponding surfaces are homeomorphic. 

\begin{cor}\label{vertex-link is bdry-link} Suppose $M\ne \mathbb{B}^3$ is a compact, irreducible and
$\bdy$-irreducible $3$--manifold with nonempty boundary. Suppose
 $\T^*$ is an ideal triangulation of
$\open{M}$, and $\T$ is an inflation of $\T^*$.  There is a closed normal
surface in $\T$  isotopic into  $\bdy M$ but not normally isotopic into $\bdy M$ if and only if there is
 a closed normal surface in $\T^*$ that
is isotopic into a vertex-linking surface but is not normally isotopic into a  vertex-linking surface.\end{cor}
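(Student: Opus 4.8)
The plan is to deduce the corollary from Theorem~\ref{bijection-ideal-inflate} together with a topological reformulation of the two linking conditions. By that theorem the combinatorial crushing of $\T$ along $\bdy M$ gives a bijection $S\leftrightarrow S^{*}$ between the closed normal surfaces of $\T$ and those of $\T^{*}$ with $S$ homeomorphic to $S^{*}$; since it is defined combinatorially (and inverts the inflation correspondence of \cite{jac-rub-inflate}) it is a bijection of normal isotopy classes. So it suffices to prove two assertions: (a) $S$ is isotopic into $\bdy M$ if and only if $S^{*}$ is isotopic into a vertex-linking surface of $\T^{*}$; and (b) $S$ is normally isotopic into $\bdy M$ if and only if $S^{*}$ is normally isotopic into a vertex-linking surface of $\T^{*}$. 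Granting (a) and (b): if $S$ is isotopic into $\bdy M$ but not normally so, then $S^{*}$ is isotopic into a vertex-linking surface by (a) and not normally so by (b), and the reverse implication is symmetric---which is exactly the statement to be proved.

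Assertion (b) should be short. The inflation construction recalled above shows that the vertex-linking surface $S_{v^{*}}$ of an ideal vertex $v^{*}$ of $\T^{*}$ inflates to the boundary-linking surface $S_{v}$ of the corresponding boundary component of $M$; equivalently, the surface corresponding to $S_{v^{*}}$ under the bijection is normally isotopic to $S_{v}$. Since ``normally isotopic into $\bdy M$'' means ``normally isotopic to a boundary-linking surface,'' and likewise for vertex-linking surfaces, (b) follows from the bijection being one of normal isotopy classes.

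For (a) I would argue through the combinatorial crushing map $c$ associated to crushing $\T$ along $\bdy M$; thus $c$ maps $X$, the component of the complement in $M$ of the boundary-linking surfaces that misses $\bdy M$, onto the ideal triangulation $\T^{*}$, collapsing each boundary-linking surface to an ideal vertex and each product piece $K_{i}\times I$ to an edge and being a homeomorphism elsewhere. The reformulation to use is that a closed surface in $M$ (resp.\ in $\open{M}$) is isotopic into $\bdy M$ (resp.\ into a vertex-linking surface) exactly when it cobounds a product region with a component of $\bdy M$ (resp.\ with a vertex-linking surface)---legitimate here because, by hypothesis, $M$ has no $2$--sphere boundary component. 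For the forward direction, after a normal isotopy (and using (b) to dispose of the case that $S$ is normally isotopic into $\bdy M$) put $S$ in $X$; then $S$ cobounds in $X$ a collar $W\cong S_{v}\times I$ of a boundary component $S_{v}$ of $X$. On a subcollar $W'\subset W$ disjoint from $S_{v}$, the map $c|_{W'}$ is a cell-like map onto its image, which avoids all ideal vertices; by the cell-like approximation theorem \cite{arm,sie}, in dimension at most three, it is a near-homeomorphism, so $c(W')$ is a product region cobounded by $S^{*}=c(S)$ and a surface isotopic to the vertex-linking surface of $v^{*}$. Hence $S^{*}$ is isotopic into a vertex-linking surface. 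For the converse, if $S^{*}$ cobounds a product region $V_{0}\cong L_{v^{*}}\times I$ with a small vertex-linking surface $L_{v^{*}}$, then $V_{0}$ misses all ideal vertices, so $c$ restricts to a cell-like---hence, by \cite{arm,sie}, a near-homeomorphism---map of $c^{-1}(V_{0})$ onto $V_{0}$, exhibiting $c^{-1}(V_{0})$ as a product region in $X$ between $S$ and the surface $\hat{L}_{v}$ corresponding to $L_{v^{*}}$; by (b), $\hat{L}_{v}$ is normally isotopic to a boundary-linking surface, so $S$ is isotopic into $\bdy M$. This proves (a), and with (b) the corollary.

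The main obstacle is exactly this transport of product regions through $c$: the crushing map is very far from a homeomorphism---it collapses whole boundary-linking surfaces to points and whole product blocks to edges---so the delicate point is to choose the regions of $X$ and of $\open{M}$ in play so that they meet only the tame degeneracies of $c$ (equivalently, so that the relevant restriction of $c$ is a proper cell-like map onto a region missing all ideal vertices), after which cell-like approximation applies and the rest is formal. The remaining ingredients---the topological reformulation of ``isotopic into the boundary/vertex-linking surface,'' and the fact that vertex-linking surfaces inflate to boundary-linking surfaces---are standard or already recorded above.
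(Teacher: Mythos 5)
Your proposal is correct and follows essentially the same route as the paper: invoke the bijection of Theorem~\ref{bijection-ideal-inflate}, note that boundary-linking surfaces correspond to vertex-linking surfaces, and transport the product region between $S$ and the boundary through the crushing map. The paper simply asserts that corresponding complementary components are homeomorphic, whereas you justify this via the cell-like approximation theorem restricted to a saturated subcollar---a detail the paper leaves implicit.
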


\begin{proof}  Suppose $S$ and $S^*$ are closed normal surfaces in $\T$ and $\T^*$, respectively, that correspond under the combinatorial crushing map taking $\T$ to $\T^*$. Then the closure of the components of the complement of $S$ have a correspondence under the combinatorial crushing map and corresponding components are homeomorphic.  Also, we have that the correspondence under the combinatorial crushing map  takes boundary-linking normal surfaces in $\T$ to vertex-linking normal surfaces in $\T^*$. Hence, if $S$ is isotopic into $\bdy M$, $S$ is isotopic to a boundary-linking surface and so, $S^*$ is isotopic to a vertex-linking surface in $\T^*$. The converse also follows.
\end{proof}

\section{annular-efficient triangulations}

If $M$ is a $3$--manifold and $\T$ is a triangulation, we say the
triangulation $\T$ is {\it $0$--efficient} if
\begin{enumerate}\item[(i)] $M$ is closed and the only normal
$2$--spheres are vertex-linking; or\item[(ii)] $\bdy M\ne\emptyset$
and the only normal disks  are vertex-linking.\end{enumerate}

It is shown in  Proposition 5.1 and Proposition 5.15 of
\cite{jac-rub-0eff} that if $\T$ is a $0$--efficient triangulation
of the compact $3$--manifold $M$, then if
\begin{enumerate}\item[(i)] $M$ is closed,  then $M \ne \rppp$, is irreducible,
 and $\T$ has only one vertex, or $M=S^3$ and $\T$ has precisely two
 vertices.
\item[(ii)] $\bdy M\ne\emptyset$, then $M$ is irreducible,
$\bdy$--irreducible, there are no normal $2$--spheres, all the
vertices are in $\bdy M$, and there is precisely one vertex in each
boundary component, or $M = \mathbb{B}^3$.\end{enumerate}

\begin{figure}[htbp] \psfrag{B}{\tiny
{$B\subset\bdy M$}}\psfrag{N}{{$e \in \bdy M$}}\psfrag{M}{{$e \in
\open{M}$}}
 \psfrag{b}{\tiny{$B'\subset\bdy M$}}\psfrag{e}{\footnotesize{e}}
 \psfrag{V}{\tiny{$v$}}\psfrag{v}{\tiny
 {$v'$}}\psfrag{D}{\footnotesize{$D$}}\psfrag{d}{\footnotesize{$D'$}}\psfrag{A}{\footnotesize{cycle
 of quads}}

        \begin{center}
\epsfxsize =3.5 in \epsfbox{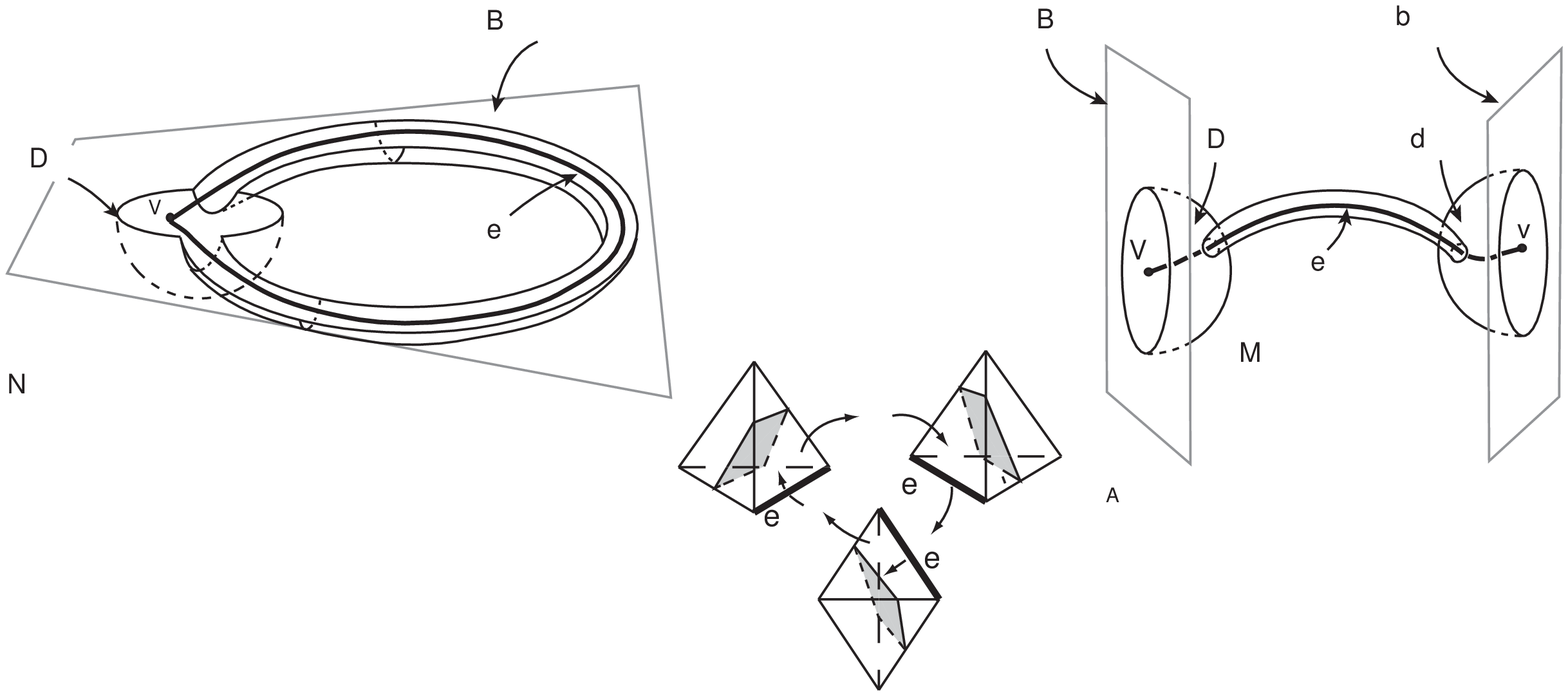} \caption{Thin
edge-linking annuli. On the left is a thin edge-linking annulus
about the edge $e$ in the boundary. On the right, the edge $e$ is in
the interior of the manifold.} \label{fr-thin-annulus.eps}
\end{center}
\end{figure}

If $\T$ is a triangulation of the $3$--manifold $M$, we say a normal
annulus in $M$ is {\it thin edge-linking} if and only if it is
normally isotopic to an arbitrarily small regular neighborhood of an
edge in the triangulation. In Figure \ref{fr-thin-annulus.eps}, we
give examples of thin edge-linking annuli; the figure on the left is
a thin edge-linking annulus for an edge $e$  in $\bdy M$ and the one
on the right is a thin edge-linking annulus for an edge $e$ in
$\open{M}$. Notice that a thin edge-linking annulus about the edge
$e$ is determined from one or two vertex-linking disks by removing
all normal triangles that meet $e$ and replacing them with normal
quads that do not meet $e$ but are in the tetrahedra containing $e$.
A necessary and sufficient condition for an edge $e$ in $\bdy M$ or
a properly embedded edge $e$ in $M$ to have a thin edge-linking
annulus about it is that no face in the triangulation has two edges
identified to $e$.

If $M$ is a compact $3$--manifold with nonempty boundary and $\T$ is
a triangulation of $M$, we say $\T$ is {\it annular-efficient} if
and only if $\T$ is $0$--efficient and the only normal, incompressible annuli are thin edge-linking annuli. David Bachman and
Saul Schleimer, who have independently studied annular-efficient triangulations, call a triangulation {\it $1/2$--efficient} if it is annular-efficient in our sense.

\begin{prop} \label{an-annular-properties} Suppose  $M\ne \mathbb{B}^3$ is a compact $3$--manifold with boundary and has an
annular-efficient triangulation. Then $M$ is irreducible,
$\bdy$--irreducible, and an-annular. Furthermore, there are no normal
 $2$--spheres, all the vertices are in $\bdy M$, the only normal disks are vertex-linking, and there
is precisely one vertex in each component of $\bdy M$.\end{prop}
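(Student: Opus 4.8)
The plan is to split the statement into two pieces: the ``furthermore'' clause together with irreducibility and $\bdy$--irreducibility, which come out of $0$--efficiency, and the an-annular conclusion, which carries the real content. For the first piece: an annular-efficient triangulation is by definition $0$--efficient, and here $\bdy M\neq\emptyset$, so the consequences of $0$--efficiency recalled just above --- Propositions~5.1 and~5.15 of \cite{jac-rub-0eff} --- apply. Since $M\neq\mathbb{B}^3$, these give immediately that $M$ is irreducible and $\bdy$--irreducible, that there are no normal $2$--spheres, that every vertex lies in $\bdy M$, and that each component of $\bdy M$ contains exactly one vertex; and clause (ii) of the definition of $0$--efficient is exactly the statement that the only normal disks are vertex-linking. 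So everything except the an-annular assertion is already in hand.

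Next I would argue $M$ is an-annular. Assume not, and let $A$ be a properly embedded essential annulus, so $A$ is incompressible and not isotopic into $\bdy M$. It is standard that an incompressible, $\bdy$--compressible annulus in an irreducible, $\bdy$--irreducible $3$--manifold is isotopic into the boundary; since $M$ is irreducible and $\bdy$--irreducible, $A$ is therefore $\bdy$--incompressible. Being incompressible and $\bdy$--incompressible, $A$ can be isotoped to a normal surface $A'$ by classical normal surface theory (Haken), which, as remarked in Section~2, carries over to our triangulations. The surface $A'$, being isotopic to $A$, is again an incompressible annulus not isotopic into $\bdy M$; in particular it is a normal incompressible annulus, so by annular-efficiency it must be a thin edge-linking annulus.

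It then remains to check that a thin edge-linking annulus is never essential. Let $e$ be the edge linked by $A'$; by definition $A'$ is normally isotopic to the frontier of an arbitrarily small regular neighborhood $N(e)$ of $e$ in $M$, and this frontier is an annulus. If the two endpoints of $e$ are distinct vertices of $\T$, then $N(e)$ is a ball meeting $\bdy M$ in two disks, and a cross-sectional disk of this ball is a compressing disk for $A'$ whose boundary is a core curve of $A'$; thus $A'$ is compressible, contradicting that it is isotopic to $A$. Otherwise $e$ is a loop at a single vertex $v$. If $e$ met $\bdy M$ only at $v$, the frontier of $N(e)$ would be a once-punctured torus, not an annulus; since it is an annulus, $e$ lies in $\bdy M$, and then $N(e)$ is a solid torus meeting $\bdy M$ in an annular neighborhood of $e$. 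The frontier annulus $A'$ and this boundary annulus cobound $N(e)$, and pushing $A'$ across the solid torus rel $\bdy A'$ isotopes $A'$ into $\bdy M$, again contradicting that $A'$ is isotopic to $A$. In every case we reach a contradiction, so $M$ has no essential annulus, i.e.\ $M$ is an-annular.

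The main obstacle is the last paragraph: one must pin down the combinatorial types that a ``linking edge'' can have and check case-by-case why the associated thin edge-linking annulus fails to be essential --- in particular the Euler-characteristic observation that an interior loop edge does not even produce an annulus, so no such edge can give rise to a thin edge-linking annulus. By comparison, the reduction to the $\bdy$--incompressible case and the appeal to normalization are routine applications of standard normal surface theory, and the first paragraph is immediate from the quoted results.
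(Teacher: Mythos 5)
Your proof is correct and follows essentially the same route as the paper: the non--an-annular conclusions are read off from $0$--efficiency via Propositions 5.1/5.15 of \cite{jac-rub-0eff}, and an essential annulus is normalized and then ruled out because a thin edge-linking annulus cannot be essential. You simply make explicit two steps the paper leaves as assertions --- that an essential annulus is $\bdy$--incompressible and hence normalizes, and the case analysis on the linking edge showing the frontier annulus is compressible or $\bdy$--parallel --- and both are carried out correctly.
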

\begin{proof} Since an annular-efficient triangulation is
$0$--efficient, it follows from Theorem 5.15 of \cite{jac-rub-0eff}
that $M$ is irreducible and $\bdy$--irreducible and there are no
normal $2$--spheres, all the vertices are in $\bdy M$, and there is
precisely one vertex in each boundary component of $M$. Hence, it remains to prove that $M$ is an-annular. If
there is a properly embedded, essential annulus in $M$, then for any
triangulation, there must be a normal, embedded, essential annulus
in $M$. In particular, this would need to be the case for the given
annular-efficient triangulation. However, a normal, embedded,
essential annulus can not be thin edge-linking, as a thin
edge-linking annulus is parallel into the boundary of the manifold
and, therefore, is not essential. Thus the assumption of an
embedded, essential annulus leads to a contradiction.\end{proof}

\begin{prop}\label{decide-annular-eff} Given a triangulation of a compact, orientable 3--manifold with nonempty boundary, no component of which is a 2--sphere, there is an algorithm to decide if the triangulation is annular--efficient. Furthermore, if the triangulation is not 0-efficient, the algorithm will construct a normal disk that is not vertex-linking; and if the triangulation is 0--efficient and is not annular-efficient,  the algorithm will construct an incompressible,  normal annulus that is not  thin edge-linking.\end{prop}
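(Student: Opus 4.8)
The plan is to test $0$--efficiency first, and then, on a $0$--efficient triangulation, to search the finite computable list of fundamental normal surfaces for an incompressible annulus that is not thin edge-linking. By \cite{jac-rub-0eff} there is an algorithm deciding whether $\T$ is $0$--efficient and, if not, producing a non vertex-linking normal disk (that there be none is the definition of $0$--efficiency when $\bdy M\ne\emptyset$), which we return. So assume $\T$ is $0$--efficient. Then by \cite{jac-rub-0eff} (the alternative $M=\bbb{B}^3$ being excluded, since $\bdy M$ has no $2$--sphere component) $M$ is irreducible and $\bdy$--irreducible, all vertices of $\T$ lie in $\bdy M$ with exactly one in each component, there are no normal $2$--spheres, and the only normal disks are vertex-linking; in particular every incompressible normal annulus is either essential or is isotopic into $\bdy M$, while every thin edge-linking annulus is isotopic into $\bdy M$.

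Two facts can be checked for a single normal surface. Whether a normal annulus is thin edge-linking is a combinatorial condition --- such an annulus is produced from the vertex link(s) at the ends of an edge $e$ by the quad-for-triangle replacement described above, so the finitely many thin edge-linking annuli are read off from $\T$ directly --- and whether a given normal surface is incompressible is decidable by \cite{jac-tol}. Everything then reduces to the claim that \emph{if $\T$ is $0$--efficient and admits an incompressible normal annulus that is not thin edge-linking, then it admits one that is a fundamental normal surface.} Granting the claim, the algorithm enumerates the fundamental normal surfaces, keeps the connected annuli among them, tests each for incompressibility and for being thin edge-linking, and reports ``not annular-efficient'', returning the offending annulus, exactly when some fundamental normal annulus is incompressible and not thin edge-linking; if none is, the claim forces $\T$ to be annular-efficient.

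To prove the claim, among all incompressible normal annuli that are not thin edge-linking let $A$ be one of least weight $w(A)=\abs{A\cap\T^{(1)}}$ (and, among those, with fewest trace curves, as in the proof of Theorem \ref{determine-bdry-linking}). If $A$ is not fundamental, write $A=X+Y$ as a nontrivial Haken sum, so $w(X),w(Y)<w(A)$; since $\T$ is $0$--efficient, no summand has a normal $2$--sphere or non vertex-linking disk component. As in the proof of Theorem \ref{determine-bdry-linking} there is an exchange annulus $E$ for this sum: a $0$--weight annulus meeting $A$ exactly in $\bdy E\subset A$. One analyses the two boundary curves of $E$ on the annulus $A$. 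If a component of $\bdy E$ bounds a disk in $A$, an innermost-disk argument using the irreducibility of $M$ either contradicts the incompressibility of $A$ or yields a weight-reducing isotopy of $A$. Otherwise the two components of $\bdy E$ are core-parallel in $A$; excising the sub-annulus of $A$ between them and gluing in $E$ produces an embedded annulus $A^{\sharp}$ with $\bdy A^{\sharp}=\bdy A$ and $w(A^{\sharp})\le w(A)$, and normalizing $A^{\sharp}$ (legitimate since $M$ is irreducible and $\bdy$--irreducible) and discarding any vertex-linking or sphere components leaves an incompressible normal annulus that is still not thin edge-linking, of weight $\le w(A)$ and --- when the weight fails to drop --- of smaller trace-curve complexity. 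In either case the minimality of $A$ is contradicted unless $A$ was already fundamental.

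I expect this last step to be the main obstacle. In the $0$--efficient and $1$--efficient settings the distinguished surfaces carry \emph{positive} Euler characteristic, so a crude weight reduction at once localizes a sphere or disk among the Haken summands; here $\chi(A)=0$ is borderline, and a Haken summand of an annulus may perfectly well be, for instance, a vertex-linking disk together with a once-punctured torus, so that no summand is even an annulus and the reduction must be pushed through the exchange annuli and their trace curves on $A$, in the style of the $A,A',A_1$ bookkeeping in the proof of Theorem \ref{determine-bdry-linking}. In addition --- and unlike in that proof --- we may not assume $M$ is an-annular (a non-an-annular $M$ carrying a $0$--efficient triangulation does occur and must be reported as not annular-efficient, by Proposition \ref{an-annular-properties}), so the argument has to be run on irreducibility and $\bdy$--irreducibility alone; and, as above, when a move fails to reduce the weight one must fall back on a secondary complexity, so the real content is a careful double induction rather than a single weight minimization.
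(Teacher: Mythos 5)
Your algorithmic frame matches the paper's (decide $0$--efficiency first, then reduce the search for a bad annulus to a finite computable list of candidates), but the key claim --- that a $0$--efficient triangulation carrying an incompressible normal annulus that is not thin edge-linking carries such an annulus on that finite list --- is precisely the step you leave open, and the route you sketch does not close it. You minimize weight, extract an exchange annulus $E$ for a decomposition $A=X+Y$, and then assert that surgering $A$ along $E$ and renormalizing yields an incompressible, non--thin-edge-linking annulus of no greater weight and, when the weight does not drop, of smaller trace-curve complexity. None of these assertions is justified: normalization after the exchange can change the picture entirely (produce a thin edge-linking annulus, boundary-parallel pieces, or something compressible), and there is no argument that the secondary complexity decreases. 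You say yourself that the real content is a ``careful double induction'' you have not carried out, so as written this is a gap, not a proof. The difficulty you flag --- that $\chi(A)=0$ is borderline and a summand could a priori be a vertex-linking disk plus a once-punctured torus --- is genuine, but weight minimization plus exchange annuli is not how it gets resolved.

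The paper's device is different and much shorter: choose $A$ so that its carrier $\C(A)$, the face of the projective solution space carrying $A$, has \emph{minimal dimension}. If $\C(A)$ is not a vertex, write $kA=nX+mY$ with $X,Y$ carried by proper faces. $0$--efficiency forces every component of $X$ and $Y$ to have nonpositive Euler characteristic, and $n\chi(X)+m\chi(Y)=k\chi(A)=0$ then forces every component to have $\chi=0$; components with boundary inherit essential boundary from $\bdy A$, so they are annuli or M\"obius bands, and minimality of $\dim\C(A)$ forces every such annulus to be thin edge-linking and excludes M\"obius band components (which would give a non--thin-edge-linking normal annulus carried by a proper face). Finally, thin edge-linking annuli can be normally isotoped off any closed normal surface, so both $X$ and $Y$ must have thin edge-linking annulus components, and the Haken sum of two thin edge-linking annuli is either again two thin edge-linking annuli or contains a vertex-linking disk component --- either way contradicting that $A$ is connected and not thin edge-linking. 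Hence a bad annulus, if any exists, occurs at a vertex of the projective solution space, with no exchange-annulus bookkeeping at all. If you want to repair your write-up, replace the weight/exchange-annulus induction with this carrier-dimension and Euler-characteristic argument; the rest of your proposal (testing $0$--efficiency via \cite{jac-rub-0eff}, recognizing thin edge-linking annuli combinatorially, testing incompressibility via \cite{jac-tol}) is consistent with the paper.
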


\begin{proof} By Proposition 5.19 of \cite{jac-rub-0eff}, it can be decided if the given triangulation is 0--efficient; and, if there is a normal disk that is not vertex-linking, the algorithm will construct one.  So, we may assume the only normal disks are vertex-linking; i.e., the triangulation is 0--efficient.

If there is an incompressible, normal annulus that is not thin edge-linking, then consider one,  say $A$, where the carrier of $A, \C(A)$, has minimal dimension.  If $\C(A)$ is not a vertex, then there are normal surfaces $X$ and $Y$ in proper faces of $\C(A)$ and positive integers $k, n$ and $m$ so that $kA = nX + mY$.  Since the triangulation is 0--efficient, the only positive Euler characteristic normal surfaces are vertex-linking normal disks; hence, neither $X$ nor $Y$ has a component with positive Euler characteristic.  It follows that the components of both $X$ and $Y$ have Euler characteristic zero.  Since $A$ has essential boundary, every component of $X$ and $Y$ with boundary has essential boundary. It follows that the components of $X$ and $Y$ with boundary are  either an  annulus or  a M\"obius band.  However, if a component of $X$ or $Y$ is a M\"obius band, then we would have a normal annulus that is not thin edge-linking and  carried by a proper face of $\C(A)$, which contradicts our choice of $A$.  So, any component of $X$ or $Y$ that has boundary is an annulus and, again by our choice of $A$, these annuli must be thin edge-linking.  Now, a thin edge-linking annulus can be normally isotoped to miss any closed normal surface. It follows that both $X$ and $Y$ must have components with boundary and as such both must have components that are thin edge-linking annuli. However, the Haken sum of two thin edge-linking annuli is either two thin edge-linking annuli (the two annuli are the same or have their boundaries in distinct boundaries of the 3--manifold), or has a component a vertex-linking disk.  Both possibilities lead to a contradiction that $A$ is connected and not a thin edge-linking annulus.  

It follows that if the triangulation is 0--efficient and there is a normal annulus that is not thin edge-linking, then there is one at a vertex of the projective solution space for the triangulation. Furthermore, we can recognize if a normal surface is a thin edge-linking annulus. \end{proof}

The following two results are from  \cite{jac-rub-0eff} and provide converses to Proposition 5.1 and Proposition 5.15 of that work.

\begin{thm}  If $M$ is a closed, orientable, irreducible
$3$--manifold distinct from $\rppp,$  then there is an algorithm that will modify any triangulation of $M$  to a $0$--efficient
triangulation.\end{thm}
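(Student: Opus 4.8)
The plan is to argue by induction on the number of tetrahedra, using as the inductive step the operation of crushing the triangulation along a non--vertex--linking normal $2$--sphere.

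First I would normalize the input: by \cite{jac-rub-layered} any triangulation of the closed manifold $M$ can be modified to a one-vertex triangulation $\T$. Having a single vertex is what makes the crushing machinery of Section~3 applicable here, because every closed normal surface misses the $0$--skeleton, so any separating normal $2$--sphere in $\T$ has a complementary side that contains no vertex. Next, apply the decision procedure for $0$--efficiency from \cite{jac-rub-0eff} (for closed $M$ this is the $2$--sphere analogue of the argument establishing Proposition~\ref{decide-annular-eff}): either $\T$ is already $0$--efficient, in which case we are done, or the procedure produces an embedded normal $2$--sphere $S$ that is not vertex-linking; I would take $S$ of least weight among all such spheres.

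Since $M$ is closed and irreducible, $S$ is separating and bounds a $3$--ball. Let $X$ be the closure of the complementary side of $S$ containing no vertex, and crush $\T$ along $S$ via $X$ — the analogue, for normal $2$--spheres, of Theorem~\ref{combinatorial-crush}, established in \cite{jac-rub-0eff}. There are two outcomes. If neither combinatorial obstruction is present (that is, $X \ne \bbb{P}(\C_X)$ and $X$ contains no cycle of truncated prisms), the crushing produces a triangulation $\T'$; because the discarded complementary piece is a $3$--ball, $\T'$ triangulates $M$ itself and has strictly fewer tetrahedra than $\T$, so I would recurse on $\T'$ (re-normalizing to one vertex if the crushing introduced a second vertex, which does not undo the drop in the number of tetrahedra). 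If an obstruction is present, one reads off from the cell structure $\C_X$ — using the leastness of the weight of $S$ and the fact that $S$ is not vertex-linking — that $M$ must be one of $S^3$, $\rppp$, or $L(3,1)$. The standing hypothesis excludes $\rppp$, and for each of $S^3$ and $L(3,1)$ I would substitute an explicit $0$--efficient triangulation (the standard two-vertex one for $S^3$ and a standard one-vertex one for $L(3,1)$; see \cite{jac-rub-0eff}), which finishes the algorithm.

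Termination is clear: in the first outcome the number of tetrahedra strictly drops, and the second outcome stops the algorithm outright, so the process halts with a $0$--efficient triangulation of $M$. The principal difficulty — the only part that is not routine bookkeeping — is the crushing step itself: one must verify that when neither obstruction is present the crushed complex is genuinely a triangulation of $M$ with fewer tetrahedra (this is where irreducibility is used, to know the discarded piece is a ball, and where the absence of cycles of truncated prisms makes the crushing ``combinatorial''), and that when an obstruction is present the combinatorics of $\C_X$ forces $M$ onto the short list above. This is precisely the content of the Fundamental Theorem for crushing a triangulation along a normal $2$--sphere in \cite{jac-rub-0eff}, and the present argument is its direct application.
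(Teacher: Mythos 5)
This theorem is not proved in the present paper at all --- it is quoted verbatim from \cite{jac-rub-0eff} (``The following two results are from \cite{jac-rub-0eff}\ldots''), so there is no in-text proof to compare against. Your sketch does follow the strategy of that reference: pass to a one-vertex triangulation, locate a non--vertex-linking normal $2$--sphere, crush, recurse on the tetrahedron count, and absorb the exceptional manifolds ($S^3$, $\rppp$, $L(3,1)$) that the crushing machinery cannot see. That is the right outline.

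There is, however, a genuine gap at the sentence ``because the discarded complementary piece is a $3$--ball, $\T'$ triangulates $M$ itself.'' Irreducibility gives you that $S$ bounds a ball on \emph{some} side; it does not give you that the ball is the side containing the vertex. The crushing machinery forces you to take $X$ to be the side with \emph{no} vertex, and that is the side you keep. If the vertexless side happens to be the ball $B$, then neither of the two combinatorial obstructions you list need be present, the crush goes through perfectly well, and it produces a triangulation of $\hat{X}=B/\bdy B\cong S^3$ --- the copy of $M$ sits in the discarded, vertex-containing side and is simply lost, after which your recursion happily terminates with a $0$--efficient triangulation of the wrong manifold. Taking $S$ of least weight is invoked only in your obstruction analysis and does not by itself rule this configuration out. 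You need an additional argument here: either show that a non--vertex-linking normal sphere can always be chosen so that the ball it bounds contains the vertex, or crush along $S$ together with the vertex-linking sphere $S_v$, taking $X$ to be the vertexless component of the complement of $S\cup S_v$ (which is $M$ minus two open balls whichever side the ball of $S$ lies on, so that $\hat{X}\cong M$) --- this is exactly the device used in the bounded-boundary analogue, Theorem 4.6 of this paper, where the surfaces $E_i$ are chosen parallel to the boundary precisely so that the retained piece $X$ is homeomorphic to $M$. A secondary, smaller point: ``take $S$ of least weight among all such spheres'' must be made algorithmic by locating such a sphere among the vertex or fundamental solutions, as in Proposition 5.19 of \cite{jac-rub-0eff}.
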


\begin{thm}  If $M\ne \mathbb{B}^3$ is a compact, orientable, irreducible,
$\bdy$--irreducible $3$--manifold, with non-empty boundary, then there is an algorithm that will modify any triangulation of $M$  to a
$0$--efficient triangulation.\end{thm}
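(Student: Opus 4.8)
The plan is to mimic the closed case stated just above: induct on the number of tetrahedra of $\T$, at each stage using normal-surface algorithms to locate an obstruction to $0$--efficiency and then crushing it away. Observe first that since $M$ is compact, orientable, irreducible and $M\ne\mathbb{B}^3$, no component of $\bdy M$ can be a $2$--sphere (a boundary $2$--sphere pushed into the interior would bound a $3$--ball and force $M=\mathbb{B}^3$); hence $M$ has a minimal-vertex triangulation with all vertices in $\bdy M$, one per component. So the opening move is to modify $\T$ algorithmically to have all its vertices in $\bdy M$: an interior vertex $v$ is surrounded by a normal vertex-linking $2$--sphere which, by irreducibility, bounds a $3$--ball, and crushing that ball removes $v$ and strictly decreases the number of tetrahedra. (Alternatively one simply invokes the reduction already established in \cite{jac-rub-layered} and \cite{jac-rub-0eff}.)

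Now suppose the resulting triangulation is not $0$--efficient. Then by the normal-surface machinery there is a non vertex-linking normal disk, or a non vertex-linking normal $2$--sphere, and in either case one can be taken at a vertex of the projective solution space; both are detectable and constructible algorithmically. If $S$ is such a $2$--sphere, it bounds a $3$--ball $B$ by irreducibility, and since all vertices lie on $\bdy M$ a dimension count shows $B$ contains no vertex; $B$ then carries the induced cell decomposition $\mathcal{C}_B$ of truncated-tetrahedra, truncated-prisms, and product blocks, and whenever the hypotheses of Theorem \ref{combinatorial-crush} hold we crush $\T$ along $S$ and, using irreducibility to control the topology of the piece removed, recover a triangulation of $M$ (after deleting any interior vertex created) with strictly fewer tetrahedra. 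If $D$ is a non vertex-linking normal disk, then $\bdy$--irreducibility forces $\bdy D$ to bound a disk $D'$ in $\bdy M$ and irreducibility forces $D$ to be boundary-parallel, cutting off a $3$--ball $B$ with $\bdy B=D\cup D'$; collapsing $B$ across $D$ (the direct analogue of the crushing in Theorem \ref{combinatorial-crush}) again produces a triangulation of $M$ with strictly fewer tetrahedra. Since each such step strictly decreases the tetrahedron count, the process terminates, and when it halts every normal $2$--sphere and every normal disk is vertex-linking, i.e.\ $\T$ is $0$--efficient; detecting and locating the obstructing surfaces is algorithmic, so the whole procedure is an algorithm.

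The main obstacle, and the technical heart of the argument (carried out in full in \cite{jac-rub-0eff}), is the case in which the hypotheses of Theorem \ref{combinatorial-crush} fail for the ball to be crushed --- when it equals its combinatorial product $\mathbb{P}(\mathcal{C}_B)$, or that product is not trivial, or there is a cycle of truncated-prisms. Such a configuration is combinatorially rigid enough to pin $M$ down to one of a short explicit list of manifolds, each of which is either excluded by the hypotheses (it would have to be reducible, $\bdy$--reducible, or $\mathbb{B}^3$) or comes with a previously-known $0$--efficient triangulation; in particular $M\ne\mathbb{B}^3$ is exactly the exclusion needed so that a non vertex-linking normal disk cannot drive the collapse into a dead end. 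Verifying that this list of exceptional configurations is genuinely exhausted, and that after each successful crushing the output is again a triangulation of $M$ rather than of a proper summand, is where the real work lies; the rest is the termination bookkeeping above together with the standard algorithmic recognition of vertex-linking normal surfaces.
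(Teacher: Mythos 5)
A preliminary but important point: the paper does not prove this statement at all --- it is quoted, without proof, from \cite{jac-rub-0eff} as one of two background results preceding Theorem \ref{thm:annular-eff} --- so there is no in-paper argument to match yours against, and what you have written is a sketch of the strategy of the cited reference. Judged on its own, the sketch has a genuine gap, and it sits exactly where you place the ``direct analogue'' of Theorem \ref{combinatorial-crush}. That theorem applies only to a \emph{closed} normal surface whose complementary piece $X$ contains no vertices of $\T$. The ball $B$ cut off by a non-vertex-linking normal disk $D$ meets $\bdy M$ in a disk $D'$, may contain the vertex of that boundary component, and in any case is bounded by a surface with boundary, so its induced cell structure contains cells along $\bdy M$ that are not among the four block types of $\C_X$; nothing established in this paper licenses ``collapsing $B$ across $D$,'' and the assertion that this returns an honest triangulation of the compact $M$, with all vertices still in $\bdy M$ and strictly fewer tetrahedra, is precisely the content of the result being proved. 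The same conflation occurs in the sphere case: applying Theorem \ref{combinatorial-crush} with $X=B$ produces an ideal triangulation of $\open{B}$, not a retriangulation of $M$, while the complement of $B$ contains the vertices and so is not eligible as $X$. The authors flag this exact difficulty after Theorem \ref{thm:construct-ideal}: the crushing they can actually perform lands in an ideal triangulation of $\open{M}$, and re-inflating to a triangulation of $M$ adds tetrahedra back, which is why a naive ``tetrahedron count decreases'' induction could not be run even for the annular analogue. Your termination argument silently assumes this problem away.

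Two smaller points. Crushing along the vertex-linking $2$--sphere of an interior vertex $v$ does not remove $v$: the ball it bounds is the star of $v$ and contains $v$, and the other side contains the remaining vertices, so neither component satisfies the hypotheses; interior vertices are removed by edge contractions as in \cite{jac-rub-layered}. And your closing paragraph concedes that the exceptional-configuration analysis and the verification that each crushing outputs a triangulation of $M$ are ``carried out in full in \cite{jac-rub-0eff}'' --- but that is the very reference from which the theorem is quoted, so deferring the technical heart to it makes the argument circular as a proof. What you have is a reasonable high-level description of the shape of the argument in \cite{jac-rub-0eff}, not a proof of the theorem.
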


Theorem \ref{thm:annular-eff}  is a converse to Proposition \ref{an-annular-properties} and is our main theorem.   Bachman and Schleimer communicated to us \cite{bac-sch-per-comm} that they have an independent proof that a compact irreducible, $\bdy$--irreducible, an-annular 3-manifold admits an annular-efficient triangulation.  The proof given here is constructive and follows from the methods introduced in \cite{jac-rub-0eff}; we believe their methods may be different.  

\begin{thm}\label{thm:annular-eff} Suppose $M\ne \mathbb{B}^3$ is a compact,
irreducible, $\bdy$--irreducible,  an-annular $3$--manifold with
nonempty boundary. Then there is an algorithm that will modify any triangulation of $M$  to
an annular-efficient triangulation of $M$.
\end{thm}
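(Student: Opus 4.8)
The plan is to exploit the inverse relationship between crushing and inflating that the paper has carefully set up. Starting from an arbitrary triangulation of $M$, first apply the known results quoted above to reduce to a $0$--efficient triangulation; the only remaining obstruction to annular-efficiency is then the possible presence of incompressible normal annuli that are not thin edge-linking, detected algorithmically by Proposition \ref{decide-annular-eff}. The key idea is that such triangulations are rigid enough to be analyzed via an intermediate ideal triangulation of $\open M$: I would crush along a suitable closed normal surface (ultimately the normal boundary, once the triangulation is arranged to have one) to obtain an ideal triangulation $\T^*$ of $\open M$, improve $\T^*$ to satisfy a ``vertex-efficient'' condition (the only normal surfaces isotopic into a vertex-linking surface are normally isotopic into it — this is where Corollary \ref{determine-vertex-linking} and the $0$--efficiency results for ideal triangulations from \cite{jac-rub-0eff} enter), and then inflate $\T^*$ back to a minimal-vertex triangulation $\T_\Lambda$ of $M$.

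The main steps, in order, would be: (1) modify the given triangulation to a minimal-vertex, $0$--efficient triangulation with normal boundary that admits a combinatorial crushing along $\bdy M$ — this uses the quoted $0$--efficiency theorem together with the machinery of \cite{jac-rub-0eff} guaranteeing the vertices can be pushed into $\bdy M$ and the boundary made normal; (2) crush along $\bdy M$ to get an ideal triangulation $\T^*$ of $\open M$, and using Corollary \ref{determine-vertex-linking}, iteratively detect and remove (by crushing along the offending surface and re-examining, noting the complexity drops) any closed normal surface in $\T^*$ that is isotopic into a vertex-linking surface but not normally isotopic into it, arriving at an ideal triangulation in which every normal surface isotopic into a vertex-linking surface is normally isotopic into it; (3) inflate the resulting $\T^*$ using a choice of frames $\Lambda$ to obtain a minimal-vertex triangulation $\T_\Lambda$ of $M$ with normal boundary. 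Then I would verify $\T_\Lambda$ is annular-efficient: it is $0$--efficient because the only positive-Euler-characteristic normal surfaces correspond under the bijection of Theorem \ref{bijection-ideal-inflate} to closed normal surfaces in $\T^*$, and a $2$--sphere or disk among those would force $\open M$ (hence $M$) to be reducible or $\bdy$--reducible or a ball, contrary to hypothesis; and any incompressible normal annulus $A$ in $\T_\Lambda$ is either closed-surface-like under crushing or forces structure I can control — by Corollary \ref{vertex-link is bdry-link} the only normal surfaces isotopic into $\bdy M$ are normally isotopic into it, and an incompressible non-thin-edge-linking annulus would, via the $I$--bundle structure of $\mathbb{P}(\C_X)$ and the an-annular hypothesis on $M$, have to be boundary-parallel, contradicting that the triangulation is boundary-linking in the controlled way inflation provides.

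The hard part will be Step (2) combined with the final verification that no ``new'' bad annuli are introduced by the inflation: inflation adds tetrahedra guided by the frames $\Lambda$, and while Theorem \ref{bijection-ideal-inflate} controls \emph{closed} normal surfaces perfectly, annuli in $M$ are \emph{properly embedded} with boundary on $\bdy M$, so they are not directly covered by that bijection. I expect the crux is to show that an incompressible normal annulus in an inflation $\T_\Lambda$ that is not thin edge-linking would have to run through the product-block region $\mathbb{P}(\C_X)$ in a way incompatible with $\Lambda$ being a frame (a minimal spine), or else would crush to an essential annulus or an incompressible surface in $\open M$ that $M$'s being an-annular and atoroidal-free-of-essential-annuli hypothesis forbids; controlling the interaction of the annulus with the chains of truncated-prisms and with the $K_i \times I$ pieces, and ruling out the M\"obius-band / twisted $I$--bundle degenerate cases exactly as in the proof of Proposition \ref{decide-annular-eff} and Theorem \ref{determine-bdry-linking}, is the technical heart. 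A secondary obstacle is ensuring the whole procedure is algorithmic and terminates — each crushing strictly reduces the number of tetrahedra or a similar complexity, so the loop in Step (2) halts, and Steps (1) and (3) are single finite constructions.
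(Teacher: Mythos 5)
Your overall strategy coincides with the paper's: pass to an ideal triangulation of $\open{M}$, improve it so that the only closed normal surfaces isotopic into a vertex-linking surface are normally isotopic into it, and then inflate; your Steps (2) and (3) are essentially Theorem \ref{thm:ideal-annular-eff} and the final assembly. However, Step (1) contains an unjustified claim: you assert the given triangulation can be modified to a $0$--efficient one \emph{with normal boundary admitting a combinatorial crushing along $\bdy M$}. A triangulation with those properties is, by definition, an inflation of an ideal triangulation, and neither the quoted $0$--efficiency theorems nor anything else cited produces one directly. The paper instead proves Theorem \ref{thm:construct-ideal}: one constructs normal surfaces $E_1,\ldots,E_n$ isotopic into the boundary components by shrinking the frontier of a neighborhood of $\bdy M$ (augmented by trees absorbing the interior vertices) and crushes along them using the \emph{general} (non-combinatorial) crushing machinery; making this work requires a substantial constructive argument eliminating nontrivial combinatorial products and cycles of truncated prisms, with termination via Kneser finiteness. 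Your Step (1) silently assumes the hardest part of that theorem.

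The more serious gap is the one you yourself flag as the crux and then leave open: Theorem \ref{bijection-ideal-inflate} controls only \emph{closed} normal surfaces, while the offending annuli are properly embedded. The mechanisms you sketch (incompatibility with the frame $\Lambda$; crushing the annulus to an essential annulus in $\open{M}$) are not the ones that work. The paper's resolution is a barrier-surface argument: if $A$ is an incompressible normal annulus with essential boundary that is not thin edge-linking, then since $M$ is an-annular, $A$ is isotopic into $\bdy M$, and $\bdy M\cup A$ is a barrier; normalizing the appropriate frontier component of a regular neighborhood of $\bdy M\cup A$ yields a \emph{closed} normal surface isotopic into $\bdy M$ but not normally isotopic into $\bdy M$ (the annulus $A$ obstructs normal isotopy). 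The inflation $\T_\Lambda$ has been engineered, via Theorem \ref{thm:ideal-annular-eff} and Corollary \ref{vertex-link is bdry-link}, to contain no such closed surface, which is the contradiction; the same barrier argument disposes of non-vertex-linking normal disks and gives $0$--efficiency. Without this reduction from properly embedded annuli and disks to closed normal surfaces, your verification in Step (3) does not go through.
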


Before giving the proof, we outline our approach and provide some
results needed in our proof.

\vspace{.125 in}\noindent (Outline of Proof). We are given a compact
$3$--manifold $M$ with boundary via a triangulation $\T$. We are
also given that $M$ is irreducible, $\bdy$--irreducible, and
an-annular. Note that given a $3$--manifold with boundary, algorithms
exist to determine if it is irreducible \cite{sch, jac-tol},
$\bdy$--irreducible \cite{haken-norflach, jac-tol, rubin-poly, tho},
or an-annular \cite{haken-homo, jac-sed-dehn}.  However, we assume in this work 
that we are given that the manifold is irreducible, $\bdy$--irreducible, and an-annular.

If there is a normal disk in $\T$ that is not vertex-linking or a
normal annulus with essential boundary in $\T$ that is not thin
edge-linking, then by a barrier surface argument, there is a closed
normal surface in $\T$ that is isotopic into $\bdy M$ but is not
\underline{normally} isotopic into $\bdy M$.  Hence, we can prove
Theorem \ref{thm:annular-eff} if we can modify the triangulation $\T$  so that the only normal surface isotopic into a component of $\bdy M$ is boundary-linking (in particular, $\T$ must have a normal boundary). To do this, we first
modify the given triangulation to an ideal triangulation of $\open{M}$,
the interior of $M$, and then modify this ideal triangulation to an
ideal triangulation of $\open{M}$ having the property that a normal
surface isotopic to a vertex-linking surface is also
\underline{normally} isotopic to that vertex-linking surface.  We
then rebuild a triangulation of $M$ by inflating this ideal
triangulation of $\open{M}$ and use Corollary \ref{vertex-link is bdry-link} to conclude that the inflation is
annular-efficient. 

Theorems 7.1 and 7.2 of \cite{jac-rub-0eff} establish, under our hypothesis, the  existence of a
0--efficient ideal triangulation of $\open{M}$.  Marc Lackenby proved in \cite{lack-taut} that with our hypotheses and the addition condition that every boundary component is an annulus,  $\open{M}$ admits a taut ideal triangulation, from which it follows that the triangulation also is 0--efficient.  While a taut ideal triangulation, which exists under additional hypothesis, implies that the ideal triangulation is  0--efficient, neither the results of \cite{jac-rub-0eff}  or \cite{lack-taut} give us what we need for our proof; and in neither of these referenced results do we have constructive proofs. Hence, we first establish a constructive proof that modifies the given triangulation of $M$ to an ideal triangulation of $\open{M}$.   Having constructed an ideal triangulation of $\open{M}$, we modify this ideal triangulation, if necessary,  to obtain an ideal triangulation in which the only normal surfaces isotopic to a vertex-linking surface are normally isotopic to it. This condition will enable us to construct an annular-efficient triangulation of the compact 3--manifold $M$ via an inflation. 

The next theorem requires a more general version of crushing a triangulation than that required earlier for a combinatorial crushing; however, this general version is precisely the version from Section 4 of \cite{jac-rub-0eff} and our theorem here is a constructive version of Theorem 7.1 of \cite{jac-rub-0eff}.
 
\begin{thm}\label{thm:construct-ideal} Suppose $M$
is a compact, irreducible, $\bdy$--irreducible, an-annular
$3$--manifold. Then for any triangulation $\T$ of $M$, there is an algorithm to modify the  triangulation $\T$ to an
ideal triangulation of $\open{M}$.\end{thm}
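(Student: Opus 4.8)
The plan is to start with the given triangulation $\T$ of $M$ and first apply the existing algorithm (Theorem 6.2 of the excerpt, the $0$--efficiency result) to replace $\T$ with a $0$--efficient triangulation; since $M \ne \mathbb{B}^3$ is irreducible and $\bdy$--irreducible, Proposition \ref{an-annular-properties}-type conclusions apply, so we may assume all vertices lie in $\bdy M$, there is exactly one vertex per boundary component, there are no normal $2$--spheres, and the only normal disks are vertex-linking. The goal is then to crush $\T$ along a boundary-linking surface to arrive at an ideal triangulation of $\open M$. The natural candidate is the normal boundary $\td B$ (the frontier of a regular neighborhood of $\bdy M$), provided it is in fact normal. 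So the first real step is to arrange that $\T$ has normal boundary: if the normal boundary surface is not normal, one applies the barrier/normalization machinery of \cite{jac-rub-0eff} — using $\bdy M$ together with the vertex-linking disks as a barrier — to normalize it, possibly after elementary moves (e.g.\ edge subdivisions, $2$--$3$ and $0$--$2$ moves) that do not change the manifold. Here one wants to invoke Theorem \ref{determine-bdry-linking} and its surrounding discussion in the right spirit: algorithmically one can detect and produce the relevant surfaces.

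Once $\T$ has normal boundary, let $X$ be the closure of the complement of the boundary-linking surface that does not meet $\bdy M$; then $X$ contains no vertices of $\T$, so $\T$ induces the cell decomposition $\C_X$ of $X$ by truncated-tetrahedra, truncated-prisms, and product blocks, as in Section 3. To crush along $\bdy M$ and land on an honest ideal triangulation of $\open M = \open X$ via Theorem \ref{combinatorial-crush} (or its more general version from Section 4 of \cite{jac-rub-0eff}), one needs the hypotheses (i) $X \ne \bbb P(\C_X)$, (ii) $\bbb P(\C_X)$ is a trivial product region, and (iii) there are no cycles of truncated-prisms. These are exactly the conditions that can fail and must be fixed, which brings us to the heart of the argument.

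The main obstacle — and the part requiring the $0$--efficiency and an-annularity hypotheses in an essential way — is ensuring conditions (i)--(iii) by algorithmic modifications of $\T$. If $X = \bbb P(\C_X)$, then $M$ would be an $I$--bundle over a surface (a product or twisted $I$--bundle), which is ruled out because such a manifold is either reducible, $\bdy$--reducible, or contains an essential annulus — contradicting the hypotheses (and $M \ne \mathbb{B}^3$); so after possibly subdividing one can force $X \ne \bbb P(\C_X)$. A cycle of truncated-prisms carries a normal annulus (or torus/Klein bottle) sitting in a region of $X$ disjoint from $S$; an-annularity, $\bdy$--irreducibility, and the absence of essential tori force such a cycle's core to be inessential, so it can be removed by a local retriangulation (as is done in \cite{jac-rub-inflate, jac-rub-0eff}) — concretely, one re-triangulates the solid-torus (or $I$--bundle) neighborhood of the cycle's core to break the chain, increasing the count of truncated-tetrahedra but decreasing a suitable complexity, so the process terminates. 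Similarly, non-triviality of the combinatorial product $\bbb P(\C_X)$ (some $K_i$ not simply connected, or a non-product $I$--bundle component) is handled by passing to the induced product region $\bbb P(X)$ and again re-triangulating to make the product pieces cell-like, exactly as in \cite{jac-rub-inflate}. Each such move is algorithmic and reduces a well-founded complexity (e.g.\ lexicographically: number of non-cell-like product components, then number of prism-cycles, then total weight), so the process halts; at the end $\T$ admits a crushing along $\bdy M$, and Theorem \ref{combinatorial-crush} (general version) yields that $\T^*$ is an ideal triangulation of $\open X = \open M$. The delicate point throughout is verifying that each retriangulation genuinely makes progress in the complexity while preserving $0$--efficiency (or at least the properties we actually need), and that the topological obstructions to (i)--(iii) are precisely the forbidden phenomena (reducibility, $\bdy$--reducibility, essential annuli/tori) ruled out by hypothesis — this is where we lean hardest on \cite{jac-rub-0eff} and \cite{jac-rub-inflate}.
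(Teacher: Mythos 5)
Your overall skeleton---produce a normal surface isotopic into $\bdy M$, verify the hypotheses of Theorem \ref{combinatorial-crush}, then crush---matches the paper, but the mechanism you propose for the crucial step is not the one that works, and it contains a genuine gap. The paper never retriangulates $M$: it keeps the given triangulation $\T$ fixed throughout (absorbing interior vertices by trees in the $1$--skeleton rather than by first running the $0$--efficiency algorithm), normalizes the frontier of a neighborhood of each boundary component together with its trees to get normal surfaces $E_1,\ldots,E_n$ isotopic into $\bdy M$, and then, each time a crushing condition fails, replaces one of the $E_i$ by a new normal surface that is isotopic but not \emph{normally} isotopic to $E_i$ (constructed from the barrier $E_i\cup A_j$, where $A_j$ is the $0$--weight annulus supplied by a non--simply--connected product end, or by surgery along a one--edge cycle of truncated prisms). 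Termination is then immediate from Kneser's Finiteness Theorem: one cannot produce infinitely many pairwise non--normally--isotopic such surfaces. Your proposal instead repairs prism cycles and bad product components by ``local retriangulation'' of solid--torus or $I$--bundle neighborhoods, with termination by an ad hoc lexicographic complexity. That is the gap: you give no construction of these retriangulations, no argument that they preserve what has already been achieved (a retriangulation can create new prism cycles and new product components elsewhere, and can destroy $0$--efficiency), and no proof that your proposed complexity actually decreases. You also invoke ``the absence of essential tori,'' which is not among the hypotheses (atoroidality is never assumed), so your claim that a prism cycle's core must be inessential is unsupported.

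A second, smaller divergence: you aim to crush along the normal boundary $\td{B}$, which requires $\T$ to have normal boundary---a property that even a $0$--efficient triangulation need not enjoy, and ``normalizing it after elementary moves'' is again an unjustified retriangulation step. The paper sidesteps this entirely by crushing along \emph{any} collection of normal surfaces isotopic into the boundary components (obtained by shrinking barrier surfaces), not along the normal boundary itself; and it disposes of cycles of truncated prisms about more than one edge by observing that they already lie inside the induced product region, so no separate topological argument about their cores is needed.
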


\begin{proof} $M$ is given by the triangulation $\T$. Let
$B_1,\ldots,B_n$ denote the components of $\bdy M$. 

If $v$ is a vertex of $\T$ in $\open{M}$, then there is an embedded
arc in the $1$--skeleton of $\T$ having $v$ as one end point and
meeting $\bdy M$ only in its other end point, a vertex of $\T$ in
$\bdy M$. 

 Put  an order on the vertices of $\T$ in $\open{M}$ and
 construct a finite number of pairwise disjoint trees,
$L_1,\ldots,L_K$ in the 1--skeleton of $\T$ so that for each $j,
1\le j\le K$, the tree $L_j$ meets $\bdy M$ in a single vertex of
$\T$ and every vertex of $\T$ in $\open{M}$ is in $L_j$ for some
$j$. For each $i, 1\le i\le n$, let $\hat{B}_i$ denote the boundary component $B_i$ of $M$ along with
all trees $L_{i_j}$ that meet $B_i$. Let $N$ denote a small regular neighborhood of $\cup_{i=1}^n \hat{B}_i$ and let $N_i$ be the component of $N$ containing $\hat{B}_i, 1\le i\le n$.  The frontier of $N$ is a
barrier surface for the component of the complement of $N$ that does
not meet any $\hat{B}_i$; and if $E_i$ is the frontier of the
component $N_i$ of $N$, then $E_i$ is isotopic into
the boundary component $B_i$.

The argument from here follows that in the proof of Theorem 7.1 of \cite{jac-rub-0eff} except here we want the argument to be constructive.  We shall indicate how the steps of that proof can be made constructive; however, we  encourage the 
reader interested in all the details to look at the presentation in
\cite{jac-rub-0eff}.

 Shrink each $E_i$ to a stable surface in the component of the complement of the frontier of $N$ not meeting $\cup\hat{B}_i$. This is constructive and we arrive at a
pairwise disjoint collection of normal surfaces (and possibly some
2--spheres, interior to tetrahedra, which may be discarded) so that for each $E_i$,  there is
precisely one normal surface that is isotopic to $E_i$ and therefore
is isotopic into the component $B_i$ of $\bdy M$. We continue to
call this, now normal, surface $E_i$ and denote the product region
determined by the isotopy of $E_i$ into $B_i$, by $P_i, 1\le i\le
n$. There is nothing to verify in this step as the conditions on $M$, leave no other possibilities.

Let $X$ denote the closure of the component of the complement of $\cup E_i$ that does not meet any $P_i;  X$ is
homeomorphic to $M$.  Furthermore, $X$ does not contain any vertices of
$\T$. Hence, we have that the triangulation $\T$ induces a nice
cell-decomposition $\C_X$ of $X$ and we can proceed to apply our methods to
crush the triangulation $\T$ along the normal surfaces
$E_1,\ldots, E_n$.

To do this we must verify that we have the sufficient conditions for crushing a triangulation along a normal surface.    In the proof of Theorem 7.1 of \cite{jac-rub-0eff},  we argued that we could assume the collection  $E_1\ldots, E_n$ as above satisfied a certain maximal condition.  Here we show that we can construct a collection along which we can crush. We may actually discover it before we get to a maximal collection in the sense of \cite{jac-rub-0eff}.

We begin by constructing the combinatorial product $\bbb{P}(C_X)$.  It follows immediately that $\bbb{P}(C_X) \ne X$; for if $\bbb{P}(C_X) = X$, then $M$ would be an I-bundle, which contradicts $M$ an-annular.

Next we have to show that we can get to a situation where we have a trivial induced product region.  

Recall that a component of the combinatorial product $\bbb{P}(C_X) $  is a product $K_j\times I$, where $K_j^{\varepsilon} =K_j\times \varepsilon, \varepsilon = 0$ or $1$, and $K_j^0\subset E_i$ and $K_j^1\subset E_{i'}$ are isomorphic  subcomplexes.  In \cite{jac-rub-0eff} we show that if $K_j^{\varepsilon}$ is not contained in a simply connected region of $E_i (E_{i'})$, then we have a properly embedded 0-weight annulus $A_j$ in $K_j\times I$. However, since $X$ is homeomorphic to $M$ and thus is an-annular, we have $i=i'$ and $A_j$ is isotopic into $E_i$.  $E_i\cup A_j$ along with  $\cup_{j\ne i}E_j$ form a barrier and we can construct a new normal surface in place of $E_i$ that is isotopic to $E_i$ but not normally isotopic.  This gives us a new collection of normal surfaces along which to consider our conditions for crushing.  Since the new surface is not normally isotopic to $E_i$, it follows from Kneser's Finiteness Theorem \cite{kne} that this can only happen a finite number of times. Hence, we eventually have a collection of normal surfaces, again called $E_1,\ldots, E_n$, where $E_i$ is parallel into $B_i$ and the combinatorial product for the component of their complement that does not meet $\cup B_i$, and again denoted $\mathbb{P}(\C_X)$, has every component where it meets  $\cup E_i$ contained in a simply connected subcomplex of some $E_i$. 

Hence,  each component $K_j\times I$ of the combinatorial product  $\bbb{P}(C_X)$ has both its end $K_j^0$ and $K_j^1$ contained in simply connected subcomplexes of $\cup E_i$, say $D_j^0$ and $D_j^1$, respectively.  While $K_j^0$ is isomorphic to $K_j^1$, it may not be the case that $D_j^0$ is isomorphic to $D_j^1$. 

Now, as in \cite{jac-rub-0eff}, we might have that $D_j^0\subset D_j^1$ (or $D_j^1\subset D_j^0$). If this is the case and we have $D_j^\varepsilon \subset E_j$, then we can construct, again using barrier surfaces, a new normal surface that is isotopic to $E_j$ but is not normally isotopic to $E_j$, arriving at a new collection of normal surfaces, still denoted $E_1,\ldots, E_n$ with $E_i$ isotopic to $B_i$. Again, by Kneser's Finiteness Theorem, this can only happen a finite number of times. 

In this way, and after a predicted number of steps, we construct a collection of normal surfaces, $E_1,\ldots, E_n$,  where we can  fill in any missing pieces in the combinatorial product $\bbb{P}(\C_X)$ to arrive at a trivial induced product region $\bbb{P}(X)$ for $X$.  For the very same reasons that $\bbb{P}(C_X) \ne X$, we have $\bbb{P}(X)\ne X$.

This takes care of product blocks in $\C_X$. We now consider truncated prisms.  If there are no cycles of truncated prisms, then we can crush the triangulation along the collection $E_1,\ldots, E_n$ constructing the desired ideal triangulation of $\open{M}$. So suppose there is a cycle of truncated prisms. Just as in \cite{jac-rub-0eff}, if the cycle is about a single edge, then there is a surgery on a member of the collection $E_1,\ldots,E_n$, giving a new collection. As before, this can only happen a finite number of times.  From the argument in \cite{jac-rub-0eff} the only possible cycle of truncated prisms about more than one edge would already be included in the induced product region. 

Hence, we can crush the triangulation $\T$ along the constructed set of normal surfaces.  The crushing gives a set of tetrahedra from the truncated tetrahedra in $\C(X)$ along with face identifications from the original face identifications, possibly translated through a chain of truncated prisms. This gives an ideal triangulation of $\open{M}$. \end{proof}

The proof of Theorem \ref{thm:construct-ideal} gives us that any time there is a closed normal surface that is isotopic into $\bdy M$, then under the hypothesis for $M$ we can crush the triangulation along a (possibly different) closed normal surface isotopic into $\bdy M$.  It seems that from this we should have a way to show that we eventually arrive at an annular efficient triangulation; however, the problem is that in crushing we arrive at an ideal triangulation of $\open{M}$ and we then need to add tetrahedra to this ideal triangulation to get back to a triangulation of the compact 3--manifold $M$.  We have not been able to show that this approach eventually terminates.  So, we switch to getting an ideal triangulation of $\open{M}$ so that the only (closed) normal surface  parallel to a vertex-linking surface is the vertex-linking surface. Then we can show any inflation of this ideal triangulation is an annular efficient triangulation of $M$. 

\begin{thm}\label{thm:ideal-annular-eff} Suppose $M$ is a compact, irreducible,
$\bdy$--irreducible, an-annular $3$--manifold. Then for any ideal triangulation $\T'$, there is a algorithm to modify the triangulation $\T'$ of $\open{M}$ to an ideal triangulation $\T^*$ of $\open{M}$ having the
property that any (closed) normal surface in $\T^*$ that is isotopic
to a vertex-linking surface is normally isotopic to that
vertex-linking surface.\end{thm}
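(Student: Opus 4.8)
The plan is to follow the pattern of Theorem~\ref{thm:construct-ideal}: find a closed normal surface isotopic to a vertex-linking surface but not normally isotopic to it, crush the ideal triangulation along it to remove it, and iterate, with termination controlled by a drop in the number of tetrahedra.

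First I would apply Corollary~\ref{determine-vertex-linking} at each ideal vertex $v_i^*$ of $\T'$: it decides whether there is a closed normal surface isotopic to the vertex-linking surface $S_{v_i^*}$ but not normally isotopic to it --- call such a surface an \emph{excess} surface at $v_i^*$ --- and constructs one when it exists. If no ideal vertex has an excess surface, then $\T'$ already has the required property and we set $\T^* = \T'$. Otherwise I would fix an ideal vertex $v_j^*$ admitting an excess surface and, using Kneser's Finiteness Theorem~\cite{kne} together with the partial order in which a surface $T$ is larger than a surface $T'$ exactly when $T'$ lies in the product region between $T$ and $S_{v_j^*}$, construct a maximal excess surface $S$ at $v_j^*$.

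Next I would crush. Set $\Sigma = S \cup \bigcup_{i \ne j} S_{v_i^*}$, where for $i \ne j$ the surface $S_{v_i^*}$ is taken to be a vertex-linking normal surface lying in a small neighborhood of $v_i^*$ chosen disjoint from $S$, and let $X$ be the closure of the component of $\open{M} \setminus \Sigma$ that contains no ideal vertex. Then $X$ contains no vertices of $\T'$, and $X$ is homeomorphic to $M$ (it is $\open{M}$ with an open collar of each end deleted), so $\open{X} \cong \open{M}$. Now I would rerun the preparatory argument of Theorem~\ref{thm:construct-ideal} to secure the hypotheses for crushing $\T'$ along $\Sigma$: that $X \ne \bbb{P}(X)$ --- otherwise $M$ would be an $I$-bundle, contradicting an-annularity; that the induced product region is trivial --- any non-simply-connected or nested product component would, by a barrier-surface argument using an-annularity, produce a strictly larger excess surface at $v_j^*$, contradicting maximality, while the components among the tiny $S_{v_i^*}$ only contribute standard truncated-tetrahedron corners; and that there are no cycles of truncated prisms --- a cycle about a single edge is removed by a surgery on $S$ (finitely many suffice, by Kneser finiteness), and a cycle about more than one edge already lies in the product region. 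Crushing $\T'$ along $\Sigma$ then produces an ideal triangulation $\T_1$ of $\open{X} \cong \open{M}$ with one ideal vertex per component of $\Sigma$; the vertex coming from $S$ has vertex-linking surface homeomorphic to $S$, hence of the same genus $\ge 1$ as $S_{v_j^*}$, so $\T_1$ is a genuine ideal triangulation.

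The crucial observation is that $\abs{\T_1} < \abs{\T'}$. The tetrahedra of $\T_1$ correspond to the truncated-tetrahedra of the cell-decomposition $\C_X$ of $X$ induced by $\T'$, and a tetrahedron $\Delta$ of $\T'$ yields a truncated-tetrahedron of $\C_X$ only if $S$ meets $\Delta$ in normal triangles alone. Since $S$ is not normally isotopic to $S_{v_j^*}$ it is not vertex-linking, hence contains at least one normal quadrilateral; the tetrahedron carrying that quadrilateral is met by $S$ in a quadrilateral, so its piece in $X$ is a truncated-prism and it contributes no tetrahedron to $\T_1$. Thus $\abs{\T_1} < \abs{\T'}$. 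Replacing $\T'$ by $\T_1$ and repeating, the number of tetrahedra strictly decreases at every stage, so after finitely many stages we reach an ideal triangulation $\T^*$ of $\open{M}$ at which Corollary~\ref{determine-vertex-linking} finds no excess surface at any ideal vertex; that is, every closed normal surface isotopic to a vertex-linking surface is normally isotopic to that vertex-linking surface. The step I expect to be the main obstacle is the preparatory one --- verifying that, after finitely many barrier-surface replacements and edge surgeries, the collection $\Sigma$ satisfies the hypotheses of the crushing construction of~\cite{jac-rub-0eff} while the modified surfaces at $v_j^*$ remain excess surfaces. This is precisely the technical core of Theorem~\ref{thm:construct-ideal}, carried out here relative to a vertex-linking surface in place of the boundary.
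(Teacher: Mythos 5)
Your proposal is correct and follows essentially the same route as the paper: detect an excess surface via Corollary~\ref{determine-vertex-linking}, crush $\T'$ along that surface together with the remaining vertex-linking surfaces by rerunning the preparatory machinery of Theorem~\ref{thm:construct-ideal}, observe that the quadrilateral forced by non-vertex-linking makes the tetrahedron count strictly drop, and iterate. The only cosmetic difference is that you impose maximality of the excess surface up front via Kneser finiteness, whereas the paper lets the crushing preparation handle any needed surface replacements.
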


\begin{proof}  We are given the ideal triangulation $\T' $ of $\open{M}$.  By Corollary \ref{determine-vertex-linking} we can decide if there is a closed normal surface in $\T'$ that is isotopic to a vertex-linking surface but is not itself a vertex-linking surface. If there is none, then $\T'$ satisfies the desired conclusion.  On the other hand, if there is one, then the algorithm will  construct one, say $F$, and $F$ is isotopic to a vertex-linking surface  but is not itself vertex-linking.  We wish to crush the triangulation $\T'$ along $F$.  However, to keep the situation consistent with the cell decompositions we like and our methods, if we have ideal vertex-linking surfaces  $S_{v_1^*},\ldots, S_{v_n^*}$ and notation has been chosen so that $F$ is isotopic to $S_{v_1^*}$ but is not normally isotopic to $S_{v_1^*}$, then we wish to crush the triangulation along the collection of surfaces $F,  S_{v_2^*},\ldots,  S_{v_n^*}$. 

If $X$ is the closure of the component of the complement of $F,  S_{v_2^*},\ldots,  S_{v_n^*}$ not meeting any of the ideal vertices of $\T'$, then $X$ is homeomorphic to $M$ and we can proceed in finding a collection of normal surfaces along which to crush the triangulation $\T'$, replacing the collection $E_1,\ldots, E_n$ in the proof of Theorem \ref{thm:construct-ideal}, by the collection $F,  S_{v_2^*},\ldots,  S_{v_n^*}$.  Hence, we arrive at an ideal triangulation $\T^*$ of $\open{X}$ (homeomorphic with $\open{M}$) obtained by crushing $\T'$ along a collection of normal surfaces with at least one of them not vertex-linking.  

The tetrahedra of $\T^*$ come from a subset of the tetrahedra of $\T'$ that become truncated tetrahedra in the cell decomposition $\C_X$ of $X$.  Now, since one of the normal surfaces along which we are crushing is not vertex-linking, it must contain a normal quadrilateral, and hence, at least one of the tetrahedra of $\T'$ gives a truncated prism in $\C_X$ and we have that $\abs{\T^*} < \abs{T'}$. 

It follows that the process must stop and it stops only when we have an ideal triangulation of $\open{M}$ where the only closed normal surface isotopic to a vertex-linking surface is itself a vertex-linking surface.\end{proof}

We are now ready to prove Theorem \ref{thm:annular-eff}; we  give the statement again for convenience.
	
\noindent{\bf Theorem.} \emph{Suppose $M\ne \mathbb{B}^3$ is a compact,
irreducible, $\bdy$--irreducible,  an-annular $3$--manifold with
nonempty boundary. Then there is an algorithm that will modify any triangulation of $M$  to
an annular-efficient triangulation of $M$}.

\begin{proof} Suppose $\T$ is a triangulation of $M$.  By Theorem \ref{thm:construct-ideal}, there is an algorithm to modify the triangulation $\T$  to an ideal triangulation $\T'$ of $\open{M}$.  Now by Theorem \ref{thm:ideal-annular-eff}, we can modify the ideal triangulation $\T'$ of $\open{M}$ to an ideal triangulation $\T^*$ of $\open{M}$ so that a (closed) normal surface isotopic to a vertex-linking surface is normally isotopic to that vertex-linking surface.  Construct any inflation, say $\T_{\frac{1}{2}}$,  of the ideal triangulation $\T^*$. Then by Corollary \ref{vertex-link is bdry-link}, the triangulation  $\T_{\frac{1}{2}}$ of $M$ has the property that a closed normal surface in  $\T_{\frac{1}{2}}$ that is isotopic into $\bdy M$ is a boundary-linking surface. From our observations above, the triangulation  $\T_{\frac{1}{2}}$ can not have a normal disk that is not vertex-linking ( $\T_{\frac{1}{2}}$ is 0--efficient) and can not have a normal annulus with essential boundary that is not thin edge-linking ( $\T_{\frac{1}{2}}$ is annular-efficient).\end{proof}

Notice, for 3--manifolds having connected boundary,  then for an annular-efficient triangulation, the only normal annuli are thin edge-linking.  Our original attempt was to prove for a manifold $M$ satisfying our hypothesis, then any triangulation of $M$ could be modified to one in which the only normal annuli are thin edge-linking.  What we were unable to eliminate is the possibility that the triangulation we have has a normal, \underline{compressible} annulus that is not thin edge-linking; such an annulus necessarily has boundary which is vertex-linking curves in distinct boundary components of $M$ (a ``fat annulus").  We do, however, have the following curious result.

\begin{prop} Suppose $\T$ is a 0--efficient triangulation of the compact 3--manifold $M\ne \mathbb{B}^3$.  Then for any edge $e$ of $\T$ having its vertices in distinct boundary components of $M$, a small regular neighborhood of $e$ is normally isotopic to a thin edge-linking annulus. 
\end{prop}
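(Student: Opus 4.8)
The statement to prove is that in a $0$--efficient triangulation of $M \ne \mathbb{B}^3$, any edge $e$ whose two endpoints lie in distinct boundary components has a thin edge--linking annulus in a small regular neighborhood. Recalling the criterion stated in the excerpt, the key obstruction to an edge $e$ having a thin edge--linking annulus is that some face of the triangulation has two of its edges identified to $e$; equivalently, $e$ appears twice on the boundary of some triangle (a ``folded'' face). So the plan is to argue by contradiction: suppose some face $\sigma$ of $\T$ has two edges identified to $e$, and derive a contradiction with either $0$--efficiency or with the two endpoints of $e$ being in distinct components of $\bdy M$.

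First I would set up the local picture. If a face $\sigma$ has two of its three edges identified to $e$, then the third edge $e'$ of $\sigma$ is an edge with both endpoints at the same vertex $v$ of $\T$ (since the two copies of $e$ share one endpoint, which becomes the apex, and the two remaining endpoints, identified to the two endpoints of $e'$, are the two endpoints of $e$). But here is the crucial point: the two endpoints of $e$ lie in \emph{distinct} boundary components $B$ and $B'$. After the identification in $\sigma$, the apex vertex of $\sigma$ is one of the endpoints of $e$ — say it lies in $B$ — and the two endpoints of $e'$ are \emph{both} the other endpoint of $e$, lying in $B'$. So the two copies of $e$ in $\partial\sigma$ run from $B$ to $B'$ and back; that is fine combinatorially, but now consider the boundary curve $\partial\sigma = e \cup e' \cup e$: this is a loop. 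I would examine the closed-up triangle: folding $\sigma$ along the two copies of $e$ produces a cone on $e'$; since $e'$ has both endpoints the single vertex $v'\in B'$, $e'$ is a loop edge, and $\sigma$ provides a disk whose boundary is $e' * e'$ or a disk bounded by a loop. Then I would push this disk off to get a normal disk or a normal $2$--sphere in $\T$ that is not vertex--linking, contradicting $0$--efficiency (invoking Proposition~\ref{an-annular-properties} and the structure theorems from \cite{jac-rub-0eff}, which tell us that a $0$--efficient triangulation of $M\ne\mathbb{B}^3$ has all vertices in $\bdy M$, exactly one per boundary component, and no non--vertex--linking normal disks).

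More directly, I would argue as follows. In a $0$--efficient triangulation of $M \ne \mathbb{B}^3$, each boundary component contains exactly one vertex of $\T$, so the two endpoints of $e$ are the unique vertices $v \in B$ and $v' \in B'$. If a face $\sigma$ had two edges identified to $e$, then — as above — its third edge $e'$ would join $v'$ to $v'$, i.e.\ $e'$ would be a loop at $v'$ lying in the interior (or on $\bdy M$). The face $\sigma$ then exhibits a singular disk in $M$ whose boundary is the loop $e'$, with $v$ as the cone point. Taking the frontier of a regular neighborhood of $\sigma$ and normalizing it using the fact that $M$ is irreducible and $\bdy$--irreducible, I would obtain either a non--vertex--linking normal disk (if $e'$ is $\bdy$--parallel, yielding a $\bdy$--compressing disk contradicting $\bdy$--irreducibility after normalization — or, in the $0$--efficient setting, simply a non--vertex--linking normal disk) or a normal sphere, in either case contradicting $0$--efficiency. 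Hence no face of $\T$ has two edges identified to $e$, and by the necessary-and-sufficient condition recalled just before Proposition~\ref{an-annular-properties}, the small regular neighborhood of $e$ is normally isotopic to a thin edge--linking annulus.

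\textbf{Main obstacle.} The delicate point is making precise why the folded face $\sigma$, whose boundary edge $e'$ is a loop at $v' \in B'$, genuinely forces a contradiction rather than just being an innocuous feature of the triangulation — i.e.\ verifying that the normalization of the boundary of a neighborhood of $\sigma$ (a barrier--surface / normalization argument in the style of \cite{jac-rub-0eff}) really produces a normal disk or sphere that is \emph{not} vertex--linking. The reason the hypothesis ``endpoints in distinct boundary components'' is used exactly here: it prevents the configuration where $\sigma$ is simply a normally embedded face meeting $\bdy M$ cleanly, and it is what guarantees $e'$ is a genuine loop, so that $\sigma \cup (\text{part of }\bdy M)$ carries essential topology. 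I expect this normalization step, and a careful case analysis of how $\sigma$ sits relative to $\bdy M$, to be where most of the work lies; everything else is combinatorial bookkeeping about vertices and the edge--identification criterion.
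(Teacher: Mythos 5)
Your overall strategy coincides with the paper's: reduce to showing that no face of $\T$ meets $e$ in two of its edges, note that the distinct-boundary-components hypothesis forces any such face $\sigma$ to be a cone with apex at one endpoint $v$ of $e$ and base edge $e'$ a loop at the other endpoint $v'$, and then contradict $0$--efficiency by producing a non-vertex-linking normal disk. However, the step you explicitly flag as the ``main obstacle'' is precisely the substance of the paper's argument, and you have not supplied it; as written there is a genuine gap at the normalization stage. The paper closes it as follows. First, $e'$ cannot lie in $\bdy M$: an edge of $\bdy M$ is essential in $\bdy M$ (a consequence of $0$--efficiency), yet $e'$ bounds a disk in $M$ (the cone $\sigma$), contradicting $\bdy$--irreducibility. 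Second --- and this is the key maneuver your sketch skips --- one does not normalize a neighborhood of the singular cone $\sigma$ directly (two of its edges are identified, so its regular neighborhood is awkward); instead, using that the vertex-link of the apex $v$ is a disk, one truncates the cone at $v$ and caps off inside the vertex-linking ball to obtain an \emph{embedded} disk $D'$ with $\bdy D' = e'$ meeting $\bdy M$ only at the vertex $v'$. Third, the frontier of a small regular neighborhood $N(D')$ is a $2$--sphere together with a properly embedded disk $D$ whose boundary is a vertex-linking curve about $v'$; since the frontier of a small regular neighborhood of $e'$ is a barrier surface, $D$ normalizes in the complement of that neighborhood of $e'$. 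The resulting normal disk cannot be vertex-linking, because a vertex-linking disk at $v'$ must meet every edge incident to $v'$ --- in particular $e'$ --- whereas the normalized disk stays outside $N(e')$ (and $M\ne\mathbb{B}^3$ rules out the disk normalizing away entirely). This contradicts $0$--efficiency. So your outline is the right one, but the construction of the embedded disk $D'$ and the barrier around $e'$ are exactly what is needed to turn ``normalize and get a contradiction'' into an actual proof, and they should be written out.
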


\begin{proof} Recall that for $M$ to have a 0--efficient triangulation, then $M$ is irreducible, $\bdy$--irreducible, all the vertices of the triangulation are in $\bdy M$, and there is precisely one-vertex  in each boundary component.  Suppose $e$ is an edge of $\T$ running between distinct boundary components of $M$.  A small regular neighborhood of $e$ is normally isotopic to a thin edge-linking annulus about $e$ if and only if there is no face of $\T$ meeting $e$ in more than one of its edges.

So, suppose there is a face $\sigma$ of $\T$ meeting $e$ in more than one of its edges.  Since $e$ runs between distinct boundary components of $M$ the only possibility is that two edges of $\sigma$ meet $e$ and $\sigma$ is a cone. Let $e'$ be the edge of $\sigma$ forming the base of the cone; i.e., $e'$ is distinct from $e$.  

The edge $e'$, which bounds a disk,  can not be in $\bdy M$ as each edge in $\bdy M$ is essential in $\bdy M$ and $M$ is $\bdy$--irreducible. If $e'$ is not in $\bdy M$,  then $e'$ bounds an embedded disk meeting $\bdy M$ at the vertices of $e$, one of which is also the vertex of $e'$. Let $v$ be the vertex of $e$ that is not a vertex of $e'$. Then using that the vertex-link of $v$ is a disk, we can truncate the cone formed by $\sigma$ and arrive at a disk $D'$ having $e'$ as its boundary and meeting $\bdy M$ only at the vertex of $e'$.  Let $N(D')$ be a small regular neighbor of $D'$. Then the frontier of $N(D')$ consists of a 2-sphere and a properly embedded disk $D$ with boundary a vertex-linking curve in $\bdy M$.  The frontier of a small regular neighborhood of $e'$ is a barrier surface and hence, $D$ can be shrunk in the complement of this small neighborhood of $e'$ to a normal disk  that is not vertex-linking ($M\ne \bbb{B}^3$). However, this contradicts that the triangulation is $0$--efficient. \end{proof}

\section{boundary slopes of surfaces}	If $S$ is a surface and $\gamma$ is a closed curve in $S$, then we call the isotopy class of $\gamma$ a \emph{slope} and refer to it as the slope of $\gamma$.  It follows from the proof of Proposition 3.2 of \cite{jac-rub-sed}  that if $M$ is a link-manifold (nonempty boundary and each boundary component is a torus) and $M$ has no essential annuli between distinct boundary components, then for any $0$--efficient triangulation $\T$ of $M$, there are only finitely many boundary slopes for normal surfaces of a bounded Euler characteristic. Hence, for such an $M$ there are only finitely many boundary slopes for incompressible and $\bdy$--incompressible surfaces of bounded Euler characteristic. We generalize this to general compact, irreducible, $\bdy$--irreducible, and an-annular 3-manifolds.

First, we have the following lemma.

\begin{lem}\label{normal-sum-edge-linking-annulus}  Suppose $M$ is a compact 3-manifold with nonempty boundary and $\T$ is a triangulation of $M$.  Furthermore, suppose $F'$ is a normal surface and $A$ is a thin edge-linking annulus about an edge in $\bdy M$. If the Haken sum $F'+A$ is defined, then $F' + A$ is either
\begin{enumerate}\item [(i)] The disjoint union $F'\cup A$, \item[(ii)] A normal surface $F$ and a vertex-linking surface, or
\item[(iii)] A normal surface $F$ isotopic to $F'$.\end{enumerate}\end{lem}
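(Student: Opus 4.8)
The idea is to analyze the Haken sum $F' + A$ by looking at the curves of intersection $F' \cap A$ and the exchange (switch) annuli they produce. Since $A$ is a thin edge-linking annulus about an edge $e \subset \bdy M$, it is a 0-weight annulus carried by the tetrahedra meeting $e$, with its boundary consisting of one or two vertex-linking curves; more importantly, $A$ is obtained from a vertex-linking disk (or the union of two such disks) by removing the normal triangles meeting $e$ and inserting the complementary quads. The first step is to record this structure precisely: a regular exchange curve $\gamma$ in $F' \cap A$ must be a normal curve on $A$, and because $A$ has only 0-weight arcs in each tetrahedron, $\gamma$ is isotopic in $A$ either to a core curve or to a boundary curve of $A$. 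I would then observe that since $A$ is very thin (a small regular neighborhood of $e$), we may assume by a normal isotopy of $F'$ that $F'$ is disjoint from $A$ unless $F'$ genuinely runs through the tetrahedra around $e$ in a way forced by its quad/triangle coordinates — i.e., the only essential intersections arise from $F'$ already carrying relevant normal disks near $e$.

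\textbf{Key steps in order.} First, if $F' \cap A = \emptyset$ (after normal isotopy), we are in case (i) and there is nothing to prove. Second, suppose there is at least one exchange curve. Each exchange curve bounds a switch region on $A$; since $A$ is an annulus, the switch curves cut $A$ into sub-annuli and at most two sub-disks (the two "caps" if the curves are core-parallel) — I would enumerate the combinatorial possibilities for the regular switch condition, using that a thin edge-linking annulus has no quads except the ones replacing triangles at $e$, so the local picture of the sum in each tetrahedron around $e$ is very constrained. Third, I would carry out the regular exchange: each switch annulus between $F'$ and $A$ contributes, after the cut-and-paste, either (a) a piece that re-closes into a surface isotopic to $F'$ together with a thin edge-linking annulus again — but the sum is by hypothesis $F' + A$, a single well-defined normal surface system, so this collapses — or (b) a vertex-linking surface split off, corresponding to the case where the switch curve is parallel to $\bdy A$ and the cap is a vertex-linking disk, giving case (ii), or (c) when the switch curve is a core of $A$ the two halves of $A$ get reattached to $F'$ along matching curves, producing a surface isotopic to $F'$ (the annulus is "absorbed" back along the edge $e$), giving case (iii). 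Fourth, I would check that no other component can appear and that Euler characteristic bookkeeping is consistent: $\eu(F' + A) = \eu(F') + \eu(A) = \eu(F')$, which rules out any spurious higher-genus piece and is consistent with all three listed outcomes (in (ii) the vertex-linking sphere/disk contributes its Euler characteristic and $F$ makes up the rest; in (iii) $F \simeq F'$).

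\textbf{Main obstacle.} The delicate point is the case analysis of the regular exchange when the exchange curves are parallel to the boundary of $A$: one must argue that the resulting split-off closed piece is genuinely \emph{vertex-linking}, not merely a normal sphere or disk, and this is where 0-efficiency of the triangulation (or at least the standing hypotheses on $M$: irreducible and $\bdy$-irreducible) would have to be invoked, exactly as in Lemma \ref{crush normal to normal} and in Proposition \ref{decide-annular-eff}. Concretely, the cap that gets cut off is a disk lying in the tetrahedra around $e$ whose boundary is a vertex-linking curve; its normalization (using that $A \cup F'$ or just the edge $e$ gives a barrier) must be vertex-linking because there are no other normal disks in a neighborhood of $\bdy M$ so constrained. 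I expect the rest — disjoint case, core-curve case, and Euler characteristic check — to be routine; the real work is pinning down that the split-off surface in case (ii) has no choice but to be a vertex-link, and ensuring the enumeration of switch-curve configurations on the thin annulus $A$ is exhaustive.
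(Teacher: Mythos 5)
There is a genuine gap: you have set up the intersection pattern incorrectly. The lemma concerns a thin edge-linking annulus $A$ about an edge $e$ lying \emph{in} $\bdy M$, so $A$ is a properly embedded annulus with $\bdy A\subset\bdy M$, built from vertex-linking disks with the triangles meeting $e$ replaced by quads. After a small normal isotopy the triangular part of $A$ (parallel to vertex-linking triangles) is pushed off $F'$, and the only forced intersections occur where normal triangles of $F'$ cutting off the corners at the endpoints of $e$ cross the quads of $A$; these concatenate into finitely many normal \emph{spanning arcs} of $A$ with endpoints on $\bdy M$. Your entire case analysis is organized around \emph{closed} intersection curves on $A$ (core-parallel versus $\bdy$-parallel circles, caps, sub-annuli), which is the wrong dichotomy here and is not exhaustive; the switch-region enumeration built on it does not engage with what actually happens. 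The paper's argument instead observes that each intersection arc $\alpha$ cuts off a small disk $d_\alpha$ from $F'$ with $\bdy d_\alpha=\alpha\cup\beta$, $\beta\subset\bdy M$, so that the regular exchange along $\alpha$ is a $\bdy$--compression of $A$ along $d_\alpha$; normality of $F'+A$ then forces either that two such $\bdy$--compressions adjacent on $A$ together cut off a vertex-linking disk (case (ii)), or that $A$ is absorbed into $F'$ yielding a surface isotopic to $F'$ (case (iii)).

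A second, related problem is in what you call the main obstacle. You propose to certify that the split-off piece in case (ii) is vertex-linking by invoking $0$--efficiency of $\T$, or irreducibility and $\bdy$--irreducibility of $M$. None of these are hypotheses of the lemma, which assumes only that $M$ is compact with nonempty boundary and $\T$ is a triangulation, so that route is unavailable. It is also unnecessary: because $A$ is by construction a vertex-linking disk (or two) with the triangles at $e$ traded for quads, the piece cut off by two adjacent $\bdy$--compressions is \emph{literally} a vertex-linking disk, read off combinatorially, with no normalization or barrier argument required. Your Euler characteristic bookkeeping is correct but does not substitute for these two missing ingredients.
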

\begin{proof} Following a small isotopy of $A$, we have that $F'\cap A$ is at most a finite number of normal spanning arcs running through the normal quads of $A$ (hence, only meeting normal triangles of $F'$).  
If $F' \cap A=\emptyset$, then we have conclusion $(i)$. So, assume $F'\cap A\ne \emptyset$.

\begin{figure}[htbp] \psfrag{A}{\scriptsize
{$A$}}
 \psfrag{F}{\scriptsize{$F'$}}\psfrag{e}{\tiny{$e\in \bdy M$}}
 \psfrag{V}{\scriptsize{$v$}}\psfrag{D}{\scriptsize{$D$}}\psfrag{G}{\scriptsize{$F$}}
 \psfrag{1}{\scriptsize{$F'+A = D+F$}}\psfrag{2}{\scriptsize{$F'+A = F \sim F'$}}\psfrag{E}{\scriptsize{$F'+A=F$}}

        \begin{center}
\epsfxsize =3.5 in \epsfbox{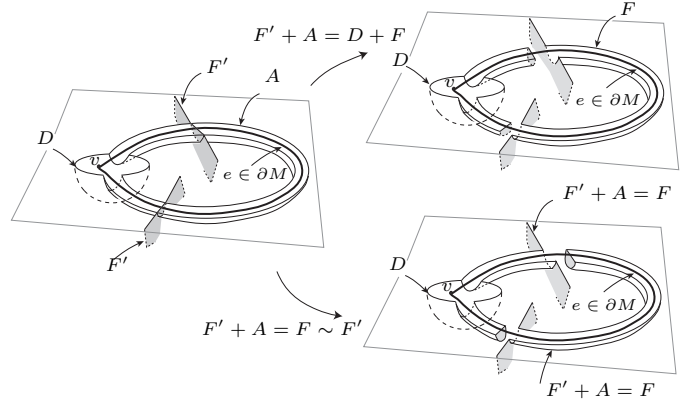} \caption{Haken sum of normal surface with thin edge-linking annulus. } \label{fr-thin-annulus-Haken-sum.eps}
\end{center}
\end{figure}
 
 An arc $\alpha\subset F'\cap A$ cuts off a small disk $d_{\alpha}$ in $F$ where $\bdy d = \alpha\cup\beta$ with $\beta\subset\bdy M$.  Following a regular exchange along $\alpha$, a copy of $d_\alpha$ is joined with $A$ forming a $\bdy$-compression of $A$. See Figure \ref{fr-thin-annulus-Haken-sum.eps}. Since $F'+A$ is a normal surface, it is only possible that two such $\bdy$--compressions occur adjacent on $A$ when together they cut off the vertex-linking disk. See the top right-hand drawing in Figure \ref{fr-thin-annulus-Haken-sum.eps}. This gives possibility (ii) of our conclusion.  Otherwise, we get possibility (iii). See bottom right-hand drawing in Figure \ref{fr-thin-annulus-Haken-sum.eps}. \end{proof}

\begin{thm} Suppose $\T$ is an annular efficient triangulation of the compact 3--manifold $M$.  Then there are only finitely many boundary slopes for connected normal surfaces in $\T$ of bounded Euler characteristic.\end{thm}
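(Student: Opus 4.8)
The plan is to bound the number of \emph{quadrilateral types} that a connected normal surface $F$ of bounded Euler characteristic can carry, and then to pass from a bound on quad types to a bound on boundary slopes. First I would recall the standard fact (from Haken--Kneser theory, as used in \cite{jac-rub-sed}) that if two connected normal surfaces $F_1,F_2$ in a fixed triangulation have exactly the same set of quadrilateral types and the same boundary slopes, and more precisely if the quad coordinates of $F_1$ dominate those of $F_2$, then $F_1$ and $F_2$ have the same boundary slope on each boundary component. So the task reduces to showing that, up to adding thin edge-linking annuli and vertex-linking surfaces, a connected normal surface of bounded Euler characteristic is determined by boundedly much data; and the key point supplied by annular-efficiency is Lemma \ref{normal-sum-edge-linking-annulus}: subtracting a thin edge-linking annulus from a normal surface changes it only by a boundary-parallel modification or by splitting off a vertex-linking disk, neither of which alters boundary slopes.

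The main steps, in order, would be: \textbf{(1)} Given $F$ connected and normal with $\eu(F)\ge -c$, consider its quad coordinates. Each quadrilateral contributes (up to a bounded multiplicative constant depending on the triangulation) a definite negative amount to the Euler characteristic \emph{unless} it lies in a ``vertical'' configuration that can be cancelled against a thin edge-linking annulus or is part of a boundary-linking/vertex-linking region. Using that $\T$ is annular-efficient, hence $0$--efficient and the only normal incompressible annuli are thin edge-linking, I would argue as in the proof of Proposition \ref{decide-annular-eff} that the ``large'' part of $F$ — the part carrying many quads — can only be absorbed by thin edge-linking annuli about boundary edges. \textbf{(2)} Repeatedly perform Haken sums (or rather regular exchanges realizing $F = F'' + kA$ for thin edge-linking annuli $A$) guided by Lemma \ref{normal-sum-edge-linking-annulus}; each such step either splits off a vertex-linking surface (bounded number, controlled by $0$--efficiency and the finitely many vertices) or replaces $F$ by an isotopic normal surface with strictly fewer quads, and the process terminates since quad-weight drops. \textbf{(3)} At termination we have a normal surface $F^\flat$, of the same boundary slopes as $F$, lying in a face of the projective solution space spanned by fundamental surfaces whose quad-weight is bounded in terms of $c$ and the size of $\T$; there are only finitely many such faces, hence only finitely many possible quad-type sets for $F^\flat$, hence (by step (1)'s domination fact) only finitely many boundary slopes.

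The hard part will be step (1): making precise the dichotomy that quadrilaterals either cost Euler characteristic or are bookkept by thin edge-linking annuli. One must rule out a connected $F$ with bounded $\eu$ but unboundedly many quads arranged so that no thin edge-linking annulus can be subtracted — and the tool for this is precisely that the only normal incompressible annuli are thin edge-linking, together with the fact (from the definition of annular-efficiency and Proposition \ref{an-annular-properties}) that $M$ is an-annular, so there are no essential annuli hiding large quad-weight. Concretely, I expect to cut $F$ along a maximal collection of disjoint essential (in $F$) compressing/boundary-compressing arcs and circles to reduce to pieces of non-negative Euler characteristic, observe that the annular and disk pieces must be boundary-parallel or vertex-linking by the efficiency hypotheses, and feed the boundary-parallel annular pieces into Lemma \ref{normal-sum-edge-linking-annulus}. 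Once that structural reduction is in place, the counting in steps (2)--(3) is routine and mirrors \cite{jac-rub-sed}, so I would only sketch it; the technical weight of the argument sits entirely in controlling the quad-weight of a connected bounded-$\eu$ surface modulo thin edge-linking annuli.
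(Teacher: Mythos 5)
Your overall skeleton---strip off thin edge-linking annuli via Lemma \ref{normal-sum-edge-linking-annulus} without changing boundary slopes, then count what is left using the Euler characteristic bound---is the same as the paper's, but your counting step has a genuine gap. The ``domination fact'' you open with is circular as stated (its hypothesis already includes ``the same boundary slopes''), and the inference you actually lean on in step (3), namely ``finitely many faces (quad-type sets) of the projective solution space $\Rightarrow$ finitely many boundary slopes,'' is false: if $X$ and $Y$ are compatible normal surfaces meeting a torus boundary component in distinct slopes, the surfaces $mX+nY$ all lie in a single face yet realize infinitely many boundary slopes as $m,n$ vary. That is precisely the phenomenon the theorem must rule out, so it cannot be assumed. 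What closes the gap is the additivity of $\chi$ under Haken sum applied to the fundamental decomposition $F=\sum p_kF_k+\sum q_mT_m+\sum r_nA_n$: every fundamental surface with boundary either has $\chi<0$ or is an annulus (M\"obius bands being excluded by an-annularity), so $\chi(F)=\sum p_k\chi(F_k)$ bounds the coefficients $p_k$ and leaves only finitely many possibilities for $F''=\sum p_kF_k$; the closed $\chi=0$ fundamentals $T_m$ are disjoint from $\bdy M$ and do not alter the boundary arc coordinates; and the annuli $A_n$ are thin edge-linking by annular-efficiency and are absorbed by case (iii) of Lemma \ref{normal-sum-edge-linking-annulus}, connectedness of $F$ ruling out (i) and (ii). Note also that because of the $T_m$ the quad-weight of your terminal surface $F^\flat$ is \emph{not} bounded, so step (3) must be phrased in terms of bounded coefficients on the negative-$\chi$ fundamentals rather than bounded total weight.

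Separately, the step you single out as ``the hard part''---cutting $F$ along compressing and boundary-compressing arcs and circles to show that quadrilaterals either cost Euler characteristic or are bookkept by thin edge-linking annuli---is an unnecessary detour, and you do not actually carry it out. No geometric decomposition of $F$ is needed: the dichotomy falls out of the algebra of the Haken sum, since every summand with boundary is either a negative-$\chi$ fundamental of bounded multiplicity or a thin edge-linking annulus. With the counting step repaired as above, the rest of your outline goes through and coincides with the paper's proof.
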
 

\begin{proof}  A triangulation $\T$ determines a collection of normal surfaces. Among these is a unique collection of normal surfaces (the fundamental normal surfaces) $F_1,\ldots, F_K,$ $T_1,\ldots,T_M,$ $ A_1,\ldots, A_N$ such that any normal surface $F$ can be written as a Haken sum $F = \sum_1^K p_k F_k +\sum_1^M q_m T_m +\sum_1^N r_n A_n$, where $p_k, 1\le k\le K; q_m, 1\le m\le M;$ and $r_n, 1\le n\le N$ are nonnegative integers, $\chi(F_k)<0$, $T_m$ a torus or Klein bottle, and $A_n$ an annulus. Since $\T$ is annular-efficient, $M$ is an-annular and, hence, no $A_m$ can be a M\"obius band.  If we set $F'' = \sum_1^K p_k F_k$, then we have $\chi(F) = \chi(F'')$ and observe for surfaces $F$ with bounded Euler characteristic, there can be only finitely many sums $F'' = \sum_1^K p_k F_k$ of bounded Euler characteristic. 

Hence, for bounded Euler characteristic, there are at most a finite number of boundary slopes for surfaces of the form   $F' = \sum_1^K p_k F_k +\sum_1^M q_m T_m$. However, for any connected surface $F = F'+A_n$, only one of the possibilities in Lemma \ref{normal-sum-edge-linking-annulus} can hold and that is (iii). It follows that for $F$ connected and $F = F' + \sum_1^N r_n A_n$, we $F\sim F'$ and so $F$ and $F'$ have the same boundary slopes. This proves our theorem.\end{proof}

The following corollary is immediate as an incompressible and $\bdy$--incompressible surface in an irreducible and $\bdy$--irreducible 3--manifold is isotopic to a normal surface in any triangulation.

\begin{cor} Suppose $M\ne \mathbb{B}^3$ is a compact,
irreducible, $\bdy$--irreducible,  an-annular $3$--manifold. Then there are only finitely many boundary slopes for connected, incompressible, and $\bdy$--incompressible  surfaces in $M$ of bounded Euler characteristic
\end{cor}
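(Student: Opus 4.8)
The plan is to read this off from Theorem~\ref{thm:annular-eff}, from the preceding theorem on boundary slopes of normal surfaces, and from the classical fact that an incompressible, $\bdy$--incompressible surface in an irreducible, $\bdy$--irreducible $3$--manifold is isotopic to a normal surface in any triangulation. First I would fix an annular-efficient triangulation $\T$ of $M$: since $M$ is irreducible with nonempty boundary and $M\ne\mathbb{B}^3$, no component of $\bdy M$ is a $2$--sphere (any such sphere would bound a ball, forcing $M=\mathbb{B}^3$), so $M$ admits triangulations, and Theorem~\ref{thm:annular-eff} converts any chosen one into an annular-efficient triangulation $\T$ of $M$.

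Next I would fix the Euler characteristic bound and apply the preceding theorem to $\T$: the set $\mathcal{B}$ of slopes arising as the boundary slope of some connected normal surface in $\T$ whose Euler characteristic meets the bound is finite. Then, given any connected, incompressible, $\bdy$--incompressible surface $S$ in $M$ whose Euler characteristic meets the bound, I would use that $M$ is irreducible and $\bdy$--irreducible to isotope $S$ to a normal surface $S'$ in $\T$. An ambient isotopy preserves the number of components, the Euler characteristic, and the isotopy class of the boundary curves; hence $S'$ is connected, $\chi(S')=\chi(S)$ still meets the bound, and $S'$ has the same boundary slope as $S$. Therefore the slope of $S$ lies in the finite set $\mathcal{B}$, which is exactly the assertion.

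I do not expect a serious obstacle, since this is genuinely a corollary of the machinery already built; the one point that deserves attention is the passage from $S$ to the normal surface $S'$ — one must use that the normalization is realized by an ambient isotopy (not by compressions or $\bdy$--compressions, which could lower $\abs{\chi}$ or alter the boundary), so that both the Euler characteristic bound and the boundary slope are preserved. This is precisely the cited normalization theorem, and its hypotheses (irreducibility and $\bdy$--irreducibility of $M$) are in force. A minor bookkeeping point is to interpret ``bounded Euler characteristic'' consistently in the preceding theorem and in the corollary (as $\chi$ bounded below, equivalently $-\chi$ bounded above); and one may note that orientability is not needed, since annular-efficiency already forces $M$ to be an-annular, so the earlier slope count — including the absence of normal M\"obius bands — applies verbatim.
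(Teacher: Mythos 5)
Your proposal is correct and follows exactly the paper's (one-line) argument: fix an annular-efficient triangulation via Theorem~\ref{thm:annular-eff}, invoke the preceding theorem on boundary slopes of connected normal surfaces, and use that an incompressible, $\bdy$--incompressible surface in an irreducible, $\bdy$--irreducible $3$--manifold is isotopic to a normal surface, so that connectedness, Euler characteristic, and boundary slope are preserved. The extra care you take about the normalization being an isotopy and about the sign convention for the Euler characteristic bound is sensible but not needed beyond what the paper already asserts.
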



\begin{thebibliography}{10}

\bibitem{arm}
S.~Armentrout.
\newblock Cellular decompositions of $3$--manifolds that yield $3$--manifolds.
\newblock {\em Memoirs Amer. Math. Soc.}, 107:72pp, 1971.

\bibitem{bac-sch-per-comm}
D.Bachman and S.~Schleimer.

\bibitem{haken-norflach}
Wolfgang Haken.
\newblock {\it Theorie der {N}ormalfl\"achen: {E}in {I}sotopickriterium f\"ur
  der {K}reisknoten}.
\newblock {\em Acta Math.}, 105:245--375, 1961.

\bibitem{haken-homo}
Wolfgang Haken.
\newblock {\it \"{U}ber das {H}om\"{o}omorphieproblem der
  $3$--{M}annigfaltigkeiten I}.
\newblock {\em Math.Zeitschr.}, 80:89--120, 1962.

\bibitem{jac-rub-0eff}
W.~Jaco and J.~H. Rubinstein.
\newblock $0$--efficient triangulations of 3-manifolds.
\newblock {\em J. Diff. Geom.}, 65:61--168, 2003.
\newblock arXiv:math/0207158.

\bibitem{jac-rub-sed}
W.~Jaco, J.~H. Rubinstein, and E.~Sedgwick.
\newblock Finding planar surfaces in knot- and link-manifolds.
\newblock {\em J.Knot Theory and its Appl.}, 18(3):397--446, 2009.
\newblock arXiv:math/0608700.

\bibitem{jac-sed-dehn}
W.~Jaco and E.~Sedgwick.
\newblock Decision problems in the space of {D}ehn fillings.
\newblock {\em Topology}, 42:845--906, 2003.
\newblock ArXiv:math.GT/9811031.

\bibitem{jac-tol}
W.~Jaco and J.~Tollefson.
\newblock Algorithms for the complete decomposition of a closed $3$-manifold.
\newblock {\em Illionis J. Math}, 39(3):358--406, 1995.

\bibitem{jac-rub-inflate}
William Jaco and J.~H. Rubinstein.
\newblock Inflations of ideal triangulations of $3$--manifolds.
\newblock (in preparation).

\bibitem{jac-rub-layered}
William Jaco and J.~Hyam Rubinstein.
\newblock Layered triangulations of three-manifolds.
\newblock arXiv:math/0603601.

\bibitem{kang-rub-ideal-II}
E.~Kang and J.H. Rubinstein.
\newblock Ideal triangulations of 3--manifolds {II}, {T}aut and angle
  structures.
\newblock {\em Algebraic and Geometric Topology}, 5:1505--1533, 2005.
\newblock arXiv:math/0410541.

\bibitem{kne}
H.~Kneser.
\newblock {\it Geschlossene {F}l\"achen in dreidimensionalen
  {M}annigfaltigkeiten}.
\newblock {\em Jahresbericht der Deut. Math. Verein.}, 38:248--260, 1929.

\bibitem{lack-taut}
Marc Lackenby.
\newblock {Taut ideal triangulations of 3-manifolds}.
\newblock {\em Geom. Topol.}, 4:369--395, 2000.
\newblock arXiv:math/0003132.

\bibitem{rubin-poly}
J.~H. Rubinstein.
\newblock Polyhedral minimal surfaces, {H}eegaard splittings and decision
  problems for $3$-dimensional manifolds.
\newblock In {\em Geometric {T}opology (Athens, GA, 1993)}, volume~2 of {\em
  AMS/IP Stud. Adv. Math.}, pages 1--20. Amer. Math. Soc., Providence, RI,
  1997.

\bibitem{sch}
H.~Schubert.
\newblock {\it {B}estimmung der {P}rimfaktorzerlegung von {V}erkettungen}.
\newblock {\em Math. Zeitschr.}, 76:116--148, 1961.

\bibitem{sie}
L.~Siebenmann.
\newblock Approximating cellular maps with homeomorphisms.
\newblock {\em Topology}, 11:271--294, 1972.

\bibitem{tho}
A.~Thompson.
\newblock Thin position and the recognition problem for {$S^3$}.
\newblock {\em Math. Res. Lett.}, 1(5):613--630, 1994.

\end{thebibliography}
\end{document}